\providecommand{\VersionLength}{long}
\newcommand{\ver}{\ifthenelse{\equal{\VersionLength}{long}}}
\newcommand{\nver}{\ifthenelse{\equal{\VersionLength}{short}}}
\renewcommand{\VersionLength}{long}
	      \newif\ifcomment
	      \newcommand{\comments}[1]{\textcolor{blue}{#1}}
	      \newcommand{\comments}[1]{}
\newtheorem*{rep@theorem}{\rep@title}
\newcommand{\newreptheorem}[2]{%
\newenvironment{rep#1}[1]{%
 \def\rep@title{#2 \ref{##1}}%
 \begin{rep@theorem}}%
 {\end{rep@theorem}}}
\newtheorem{proposition}{Proposition}
\newtheorem{theorem}{Theorem}
\newtheorem{definition}{Definition}
\newtheorem{lemma}{Lemma}
\newtheorem{corollary}{Corollary}
\newlength\figureheight
\newlength\figurewidth
\DeclareMathOperator*{\diag}{diag}     
\DeclareMathOperator*{\tr}{tr}     
\newcommand{\I}{{\mathcal I}}
\newcommand{\mi}{{\mathcal J_1}}
\newcommand{\mj}{{\mathcal J_2}}
\newcommand{\mall}{{\mathcal J_i}}
\newcommand{\J}{{\mathcal J}}
\newcommand{\K}{{\mathcal K}}
\newcommand{\plant}{G}
\newcommand{\pwe}{F}
\newcommand{\gam}{\rho}
\newcommand{\part}{\mathcal D}
\newcommand{\s}{z}
\newcommand{\1}{\delta}
\newcommand{\act}{\max_{\mi, \mj \in \mathcal V } \left( \part^{\mi,\mj}_{2,2} \right)^2}
\newcommand{\op}{\mathcal P_\1}
\newcommand{\opd}{\mathcal D_\1}
\newcommand{\ie}{\textit{i.e.}}
\newcommand{\G}{\mathcal B}
\newcommand{\BEQ}{\begin{eqnarray*}}
  \newcommand{\EEQ}{\end{eqnarray*}}
  \newcommand{\BEQL}{\begin{eqnarray}}
    \newcommand{\EEQL}{\end{eqnarray}}
    \newcommand{\BDEF}{\begin{definition}}
      \newcommand{\EDEF}{\end{definition}}
      \newcommand{\BTHM}{\begin{theorem}}
	\newcommand{\ETHM}{\end{theorem}}
	\newcommand{\BLEM}{\begin{lemma}}
	  \newcommand{\ELEM}{\end{lemma}}
	  \newcommand{\BPF}{\begin{proof}}
	    \newcommand{\EPF}{\end{proof}}
	    \newcommand{\BCO}{\begin{corollary}}
	      \newcommand{\ECO}{\end{corollary}}
	      \newcommand{\inr}{\mathbb{R}}
	      \newcommand{\comset}{\mathcal {V}}
\begin{document}

\title{Attack-Resilient $\mathcal H_2$, $\mathcal H_\infty$, and $\ell_1$ State Estimator}

\author{Yorie~Nakahira,~\IEEEmembership{Student Member,~IEEE,}
       and~Yilin~Mo,~\IEEEmembership{Member,~IEEE,}
\thanks{Y. Nakahira was with the Department
of Computing and Mathematical Sciences, California Institute of Technology, Pasadena,
CA, 91125 USA, e-mail: ynakahir@caltech.edu, website: http://users.cms.caltech.edu/\textasciitilde ynakahir/.}
\thanks{Y. Mo are with the Department
of Electrical and Electronic Engineering, Nanyang Technological University, 639798 Singapore, e-mail: ylmo@ntu.edu.sg, website: http://yilinmo.github.io/}
\thanks{Manuscript received in June 2017.}}


\maketitle

\begin{abstract}

This paper considers the secure state estimation problem for noisy systems in the presence of sparse sensor integrity attacks. We show a fundamental limitation: that is, $2\rho$-detectability is necessary for achieving bounded estimation errors, where $\rho$ is the number of attacks. This condition is weaker than the $2\rho$-observability condition typically assumed in the literature. Conversely, we propose a real-time state estimator that achieves the fundamental limitation. The proposed state estimator is inspired by robust control and FDI: that is, it consists of local Luenberger estimators, local residual detectors, and a global fusion process. We show its performance guarantees for $\mathcal H_2$, $\mathcal H_\infty$, and $\ell_1$ systems. Finally, numerical examples show that it has relatively low estimation errors among existing algorithms and average computation time for systems with a sufficiently small number of compromised sensors.

\end{abstract}

\begin{IEEEkeywords}
Secure state estimation, fault tolerance, sparse sensor integrity attacks, Luenberger observer, robust control
\end{IEEEkeywords}

%
\IEEEpeerreviewmaketitle

\section{Introduction}

Fault tolerance in Cyber-physical Systems (CPSs) is of great importance~\cite{securecontrol2008,cardenas2009challenges,rajkumar2010cyber,sridhar2012cyber,datta2016accountability}. For example in the power system, false data injection can introduce errors in state estimation and provide financial gains for attackers~\cite{henrik2010, Xie2011,liu2009}. In flights, autonomous vehicles, and the Internet of Things, manipulations in software and sensing can cause human injury and economic damage~\cite{farwell2011stuxnet,maurer2016autonomous,nobles2016cyber}. Motivated by these security issues, this paper studies the secure estimation problem for noisy systems in the presence of sensor integrity attacks. 


For the secure estimation problem in static systems, robust estimators are extensively studied in the literature. Common robust estimators include the M-estimator, L-estimator, and R-estimator~\cite{Kassam1985,robust2006,robust2009}, and they are used to account for sensor integrity attacks in \cite{mo2015secure}. For the secure estimation problem in dynamical systems, robust control provides tools to deal with noise in estimation and control~\cite{dahleh1994control,zhou1996robust}. Although robust control typically assumes that system disturbances are bounded or follow well-defined distributions, such assumptions may not be valid for sensor faults caused by intelligent attackers~\cite{mo2015secure,datta2016accountability}. Fault detection and isolation (FDI) also provides methods for identifying and pinpointing faults in sensors~\cite{gertler1998fault,venkatasubramanian2003review,isermann2006fault,chen2012robust}. One common approach of FDI for linear dynamical systems under sensor integrity attacks is to construct \textit{residuals} that take non-zero values only in the presence of faults (see~\cite{patton2013issues} and references therein). The generation of such residuals is possible only when a fault is separable from normal disturbances and modeling uncertainties, which requires certain kinds of system observability.

When attackers can change the measurements of a limited number of sensors in large-scale systems, sensor attacks can be modeled as \emph{sparse} but unbounded disturbances. For sparse sensor integrity attacks, recent literature has studied the fundamental limitation and achievable performance to identify the attacks and estimate the system states. Fawzi et al. show that if $\rho$ sensors are compromised, then $2 \rho$-observability (\ie, the system remains observable after removing any set of $2 \rho$ sensors) is necessary to guarantee perfect attack identification and accurate state estimation for noiseless systems~\cite{Fawzi2014}. The authors further propose to solve a $\ell_0$ problem to achieve accurate state estimation under the assumption of $2 \rho$-observability. This work is generalized to noisy systems by Pajic et al.~\cite{pajic2017design,pajic2017attack}. Shoukry et al. propose to use the Satisfiability Modulo Theory (SMT) solver to harness the complexity in secure estimation~\cite{shoukry2017secure}. However, the worst case complexity for the $\ell_0$ optimization and that of the SMT solver are combinatorial. Moreover, these estimators also have delays, which may cause performance degradation when used for real-time control. To transform the problem into a convex program, Fawzi et al. and Mo et al. propose to use optimization based methods~\cite{Fawzi2014,mo2016secure}. To address the estimation delays, various Luenberger-like observers are proposed~\cite{nakahira2015dynamic,shoukry2016event,mo2016secure,lu2017secure,chong2015observability,shoukry2017secure}. It is worth noticing that the estimators proposed in \cite{pajic2017design,pajic2017attack,shoukry2016event,mo2016secure,lu2017secure,chong2015observability,shoukry2017secure} require the assumption of $2 \rho$-observability or stronger to guarantee accurate attack identification and secure state estimation. 

In this paper, we consider the fundamental limitation and achievable performance to achieve fault-tolerant estimation. By fault-tolerant estimation, we refer to achieving bounded estimation errors. Compared with fault identification, fault-tolerant estimation requires relaxed assumptions and accounts for potentially non-detectable and non-identifiable attacks in noisy systems. We prove that a necessary condition to achieve fault-tolerant estimation under $\rho$ compromised sensors is that the system is $2 \rho$-detectable (the system needs to remain detectable after removing any set of $2 \rho$ sensors). This necessary condition suggests that, if a system has many stable modes, then the number of sensors required to achieve fault-tolerant estimation is much smaller than that to achieve fault identification. Conversely, we propose a secure state estimator that guarantees bounded estimation error under the assumption of $2\rho$-detectability. The proposed state estimator is inspired by robust control and FDI: that is, it consists of the local Luenberger estimators, the local residual detectors, and a global fusion process. A preliminary version of this paper was presented at the 2015 IEEE Conference on Decision and Control, deriving the worst-case estimation errors in the $\ell_1$ system~\cite{nakahira2015dynamic}. This paper extends the result of~\cite{nakahira2015dynamic} to the $\mathcal H_2$ system and the $\mathcal H_\infty$ system. To the best of our knowledge, our paper is the first to show that a mixture of two-norm bounded and sparse-unbounded input can produce \textit{two-norm} bounded output. Finally, numerical examples show that the proposed state estimator has relatively low estimation errors among existing algorithms and average computation time for systems with a sufficiently small number of compromised sensors.

\section{Preliminary}

\subsection{Notations} The set of natural numbers is denoted $\mathbb N$, the set of non-negative integers is denoted $\mathbb Z_+$, the set of real numbers is denoted $\mathbb R$, the set of non-negative real numbers is denoted $\mathbb R_+$, and the set of complex numbers is denoted $\mathbb C$. 
The cardinality of a set $S$ is denoted $| S |$. 
A sequence $\{x(t)\}_{t \in Z_+}$ is abbreviated by the lower case letter $x$, and the truncated sequence from $t_1$ to $t_2$ is denoted $x(t_1:t_2)$. Let $\|x\|_0 = | \{ i: \exists t \text{ s.t. } x_i(t) \neq 0 \}|$ denote the number of entries in $x$ that take non-zero values for some time. 
The infinity-norm of a sequence $x \in \mathbb R^n$ is defined as $\|x\|_\infty \triangleq \sup_{t} \max_{i} |x_i(t)|,$ and the two-norm of a sequence $x$ is defined as $\|x\|_2 \triangleq (\sum_{t=0}^\infty \sum_{i = 1}^n |x_i(t)|^2 )^{1/2}.$ 
Similarly, the norms of a truncated sequence $x(0:T)$ are defined as 
$
\|x(t_1:t_2)\|_\infty \triangleq \max_{t_1 \leq t \leq t_2}\max_i |x_i(t)| $ and $
\|x(t_1:t_2)\|_2 \triangleq (\sum_{t=t_1}^{t_2}\sum_{i = 1}^n |x_i(t)|^2 )^{1/2}.
$
Let $\ell_\infty$ be the space of sequences with bounded infinity-norm, and $\ell_2$ be the space of sequences with bounded two-norm.

\subsection{LTI Systems and System Norms} Let $\plant$ be the following discrete-time linear time-invariant (LTI) system:
\begin{align}
\label{eq:LTI}
&x(t+1) = A x(t) + B w(t) , & y(t) = C x(t) + D w(t),
\end{align}
 with the initial condition $x(0) = 0$, system state $x(t) \in \mathbb R^n$, system input $w(t) \in \mathbb R^l$, and system output $y(t) \in \mathbb R^m$. The transfer matrix of the system is
\BEQ
\plant = \left[
  \begin{array}{c|c}
    A & B \\
    \hline
    C &  D
  \end{array}
  \right] . 
\EEQ
The transfer function of the system is
$\hat \plant(z) = B ( z I - A )^{-1} C + D$. For $\ell_p$ system input and $\ell_q$ system output, the system norm (namely, \textit{induced norm}) is given by
    \begin{align}
    \label{eq:induced_norm}
      \|\plant\|_{p \rightarrow q} \triangleq \sup_{\| w\|_p \neq 0} \frac{ \|y \|_q}{\| w\|_p}.
    \end{align}
In particular, the induced-norms for $(p,q) = (2, 2), (2, \infty), (\infty, \infty)$ are given by $\mathcal H_\infty$, $\mathcal H_2$, and $\mathcal L_1$ norms, respectively. These induced-norms are bounded when $A$ is strictly stable (\ie, all the eigenvalues of $A$ is in the open unit circle). See \cite{dahleh1994control,zhou1996robust} for further details.

\ver{
In particular, the induced-norms for $(p,q) = (2, 2), (2, \infty), (\infty, \infty)$ are respectively $\mathcal H_\infty$, $\mathcal H_2$, and $\mathcal L_1$ norms, which are defined as
\begin{align}
&\|\plant\|_{2 \rightarrow \infty}  = \|\plant\|_{2} = \int^\pi_{-\pi} \tr ( \hat \plant( e^{i \theta}  ) \hat \plant^T ( e^{-i \theta} ) ) d \theta / 2 \pi\\
&\|\plant\|_{2 \rightarrow 2}  = \| \plant\|_{\infty} = \text{ess} \sup_{e^{i \theta}} \sigma_{\max} \hat \plant(e^{i \theta})\\
&\|\plant\|_{\infty \rightarrow \infty}  = \| \plant\|_{1} = \underset{1\leq i \leq n}{\text{max}} \sum_{j = 1}^{l} \sum^{\infty}_{t = 0} |h_{ij} (t)|
\end{align}
where $h_{ij}$ is the impulse response from $w_j(t)$ to $y_i(t)$. These induced-norms are bounded when $A$ is strictly stable (\ie, all the eigenvalues of $A$ is in the open unit circle). See \cite{dahleh1994control} for further details. The induced norms allow us to study  
}



To construct a linear state estimator with bounded estimation errors, the LTI system \eqref{eq:LTI} is required to be detectable, \ie, there exist some matrix $K$ such that $A+KC$ is stable. Given the matrix $K$ such that $A+KC$ is strictly stable, we can construct a linear estimator
\BEQL
\label{eq:LTIest}
\hat x(t+1) = A \hat x(t) - K ( y(t) - C\hat x(t)),\,\hat x(0)=0.
\EEQL
We define its estimation error $e$ and residual vector $r$ as 
\begin{align*}
  &e(t) \triangleq x(t) - \hat x(t), &r(t) \triangleq y(t) - C\hat x(t),
  \label{eq:deferrorandresidue}
\end{align*}
respectively. The signals $e$ and $r$ satisfy the following dynamics 
\begin{align*}
&  e(t+1) = (A + K C)  e(t) + (B  + KD) w(t), & e(0) = 0\\
&  r(t) = C e(t) + D w(t),
\end{align*}
Thus, the LTI system from $w$ to $e$, $E(K)$, and the LTI system from $w$ to $r$, $\pwe(K)$, are respectively given by
\BEQL
\label{eq:def_Ek}
&E(K) =
\left[
\begin{array}{c|c}
  A + K C & B+ K D\\
  \hline
  I &  0
\end{array}
\right] \\
\label{eq:def_Gk}
&\pwe(K) =
\left[
\begin{array}{c|c}
  A + K C & B+ K D \\
  \hline
  C &  D
\end{array} \right].
\EEQL
Because $A + K C$ is strictly stable, both $E(K) $ and $\pwe(K)$ have bounded induced-norms. The induced-norms upper-bound the values of $\| e \|_q$ and $\| r \|_q$ as follows.
\BLEM
\label{lemma:nec_original}
If $\| w \|_p \leq 1$, then the estimation error $e$ and residual vector $r$ satisfy 
\begin{align}
&&\| e \|_q \leq \| E(K) \|_{p \rightarrow q},
&&\| r \|_q \leq \| \pwe(K)\|_{p \rightarrow q}.
\end{align}
\ELEM

\section{Problem Formulation}
\label{sec:problem}

We study the secure state estimation problem in the presence of sensor attacks. Consider the discrete-time LTI system:
\begin{equation}
\label{eq:system}
\begin{aligned}
&x(t+1) = A x(t) + B w(t) , & x(0)=0\\
&y(t) = C x(t)  + D w(t) +  a(t),
\end{aligned}
\end{equation}
where $x(t) \in \inr^{n}$ is the system state, $w(t) \in \inr^l$ is the input disturbance, $y(t) \in \inr^{m}$ is the output measurement, and $a(t) \in \inr^{m}$ is the bias injected by the adversary (we call $a(t)$ the \textit{attack}). This system is illustrated in Fig. \ref{fig:robustestimationwithattack}. The time indices $t \in \mathbb Z_+$ are non-negative integers and start from zero. Without loss of generality, we assume that the disturbance matrix $B$ has full row rank (otherwise we can perform the Kalman decomposition and work on the controllable space of $(A,B)$). Each sensor is indexed by $i \in \{ 1, \cdots, m\}$ and produces measurement $y_i(t)$, which jointly comprises the measurement vector $y(t) = [y_1(t),\dots,y_m(t)]^T$. Sensor $i$ is said to be \textit{compromised} if $a_i(t) \neq 0$ at some time $t \in \mathbb Z_+$ and is said to be \textit{benign} otherwise. The maximum number of sensors that the attacker can compromise is $\gam$, \ie,  
\begin{align}
\label{eq:attack}
\|a\|_0 \leq \gam .
\end{align}
If $a(t)$ satisfies \eqref{eq:attack}, then we say that it is $\gam$\textit{-sparse}. 
Let $\mathcal S \triangleq \{1,\dots,m\}$ denote the set of all sensors, $\mathcal C\subset \mathcal S$ denote the set of compromised sensors, and $\G \triangleq \mathcal S\backslash \mathcal C$ denote the set of benign sensors. The set $\mathcal C$ is assumed to be unknown.\footnote{Take the setting of \cite{pajic2014robustness} for example. When the system is noisy, the optimization problem $\min_{x_t \in \inr^n } \| [y(t)^T, \cdots , y( t+n-1)^T ]^T - \mathcal O x_t \|$ ($\mathcal O$ is the observability matrix) may not give correct set of compromised sensors $\{ i : \exists t, a_i (t) \neq 0\}$.} A causal state estimator is an infinite sequence of functions $\{f_t\}$ where $f_t$ is a mapping from all output measurements to a state estimate: 
\begin{equation}
\label{def:state_estimator}
\hat x(t) = f_t( y(0:t-1) ).
\end{equation}
The estimation error of \eqref{def:state_estimator} is defined as the difference between the system state and the state estimate:\footnote{Although abbreviate it as $e(t)$, the estimation error is also a function of disturbance $w$, attack $a$, and the estimator $\{f_t\}$.  }
\begin{equation}
e(t) \triangleq x(t) - \hat x(t) . 
\end{equation}

   \begin{figure}[ht]
    \begin{center}
      \begin{tikzpicture}[>=stealth',
	  box/.style={rectangle, draw=blue!50,fill=blue!20,rounded corners, semithick,minimum size=7mm},
	  point/.style={coordinate},
	  every node/.append style={font=\small}
	]
	\matrix[row sep = 5mm, column sep = 4mm]{
	  \node (B) [box] {$B$};
	  &\node (noise) {$w$};
	  &;
	  &\node (D) [box] {$D$};
	  &;
	  &;
	  &\\
	  \node(plant) [box] {$(zI-A)^{-1}$};
	  & \node (p1) [point] {};
	  &\node (C) [box] {$C$};
	  & \node (plus1) [circle,draw,inner sep=2pt] {};
	  &\node (estimator) [box] {Estimator};
	  & \node (plus2) [circle,draw,inner sep=2pt] {};
	  &\node (error) {$e$};\\
	  ;
	  &;
	  &;
	  &\node (attack) {a};
	  &;
	  &;
	  &\\
	  ;
	  &\node (p2) [point] {};
	  &;
	  &;
	  &;
	  &\node (p3) [point] {};
	  &\\
	};
	\draw [semithick] (plus1.north)--(plus1.south);
	\draw [semithick] (plus1.east)--(plus1.west);
	\draw [semithick] (plus2.north)--(plus2.south);
	\draw [semithick] (plus2.east)--(plus2.west);
	\draw [semithick,->] (noise)--(B);
	\draw [semithick,->] (B)--(plant);
	\draw [semithick,->] (plant)-- node[midway,above]{$x$} (C);
	\draw [semithick,->] (p1)--(p2)--(p3)--node[very near end,right]{$+$} (plus2);
	\draw [semithick,->] (C)--(plus1);
	\draw [semithick,->] (noise)--(D);
	\draw [semithick,->] (D)--(plus1);
	\draw [semithick,->] (attack)--(plus1);
	\draw [semithick,->] (plus1)--node[midway,above]{$y$} (estimator);
	\draw [semithick,->] (estimator)--node[midway,below]{$\hat x$}
	node[very near end,above] {$-$} 
	(plus2);
	\draw [semithick,->] (plus2)--(error);
      \end{tikzpicture}
    \end{center}
    \caption{Diagram of the Estimation Problem in Adversarial Environment. }
    \label{fig:robustestimationwithattack} 
  \end{figure}
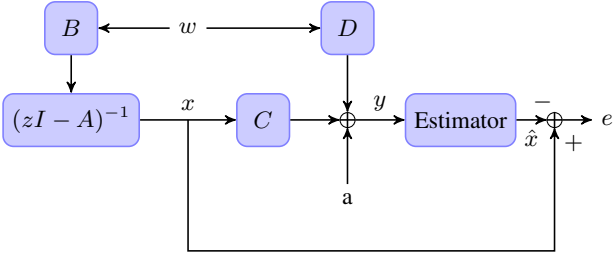

We consider the input containing a mixture of a $p$-norm bounded disturbance and a $\gam$-sparse attack and study the following worst-case estimation error in $q$-norm:
\begin{equation}  \label{eq:defworstcaseerror}
 \sup_{\|w\|_p\leq 1,\,\|a\|_0\leq \gam} \|e\|_q,
\end{equation}
where
 \begin{align}
(p, q) = (2,2),(2,\infty), (\infty,\infty). 
\end{align}
We consider \eqref{eq:defworstcaseerror} instead of attack isolation because the attack on a noisy system may not be correctable in the sense defined in \cite{Fawzi2014,pajic2014robustness}.

\begin{definition}
An causal state estimator $\{f_t\}$ is said to be $\epsilon$-resilient to attack if its worst-case estimation error satisfies $\sup_{\|w\|_p\leq 1,\,\|a\|_0\leq \gam} \|e\|_q < \epsilon$, where $\epsilon$ is a positive and finite scalar. 
\end{definition}
When the estimator is $\epsilon$-resilient for some finite $\epsilon >0$, then we say the state estimator is resilient to attack. The goal of this paper is to study the design problem of a resilient state estimator $\{ f_t\}$. Towards that end, we first show a fundamental limitation for the existence of a resilient estimator (Section \ref{sec:limitations}), and we then propose a resilient estimator (Section \ref{sec:design}) and analyze the estimation errors (Section \ref{sec:est_analysis}).

\section{Necessary and Sufficient Conditions for Resilience to Attack}
\label{sec:main}
In this section, we first provide a necessary condition for the existence of a resilient estimator and then, assuming the necessary condition, propose a resilient estimator. We first define some notation that will be used later. 

\begin{definition}[Projection map]
Let $e_i$ be the $i_{th}$ canonical basis vector of the space $\mathbb R^m$ and $\mathcal I = \{i_1,\dots,i_{m'}\}\subseteq \mathcal S$ be an index set with carnality $m' (\leq m)$. We define the projection map $P_{\I}: \mathbb R^m \rightarrow \mathbb R^{m'}$ as 
  \begin{align}
  \label{eq:proj_matrix}
    P_\I = \begin{bmatrix}
      e_{i_1}&\dots&e_{i_{m'}}
    \end{bmatrix}^T\in \mathbb R^{m' \times m}. 
  \end{align}
  \end{definition}
  \noindent Using $P_{\mathcal I}$ in \eqref{eq:proj_matrix}, the measurements of the set of sensors $\mathcal I \subset \mathcal S$ can be written as
  \begin{align*}
   y_{\I}(t) \triangleq P_{\mathcal I} y(t) \in \mathbb R^{m'}.
  \end{align*}
Similarly, the measurement matrix and the sensor noise matrix corresponding to the set of sensors $\mathcal I $ can be respectively written as 
\begin{align*}
&C_{\mathcal I} \triangleq P_\I C, &D_{\mathcal I} \triangleq P_\I D.
\end{align*}

\subsection{Necessary Condition for Resilience to Attack}
\label{sec:limitations}

In this section, we give a fundamental limitation for achieving bounded worst-case estimation errors. 

\begin{definition}
The system \eqref{eq:system} is said to be $\chi$-detectable if $(A, C_\K)$ is detectable for any set of sensors $\K \subset \mathcal S $ with cardinality $|\K| = m - \chi$.
\end{definition}

\begin{theorem}\label{thm:nonexistence}
If system \eqref{eq:system} is not $2 \rho$-detectable, then there is no state estimator $\{ f_t\}$ that is $\epsilon$-resilient to attack for any finite $\epsilon > 0$. 
\end{theorem}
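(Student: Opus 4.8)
The plan is to prove the contrapositive by a standard indistinguishability (two-world) argument. Suppose the system is not $2\rho$-detectable. Then there exists a set of sensors $\mathcal K \subset \mathcal S$ with $|\mathcal K| = m - 2\rho$ such that $(A, C_{\mathcal K})$ is not detectable. By the PBH test, there is an eigenvalue $\lambda$ of $A$ with $|\lambda| \geq 1$ and a (generally complex) eigenvector $v \neq 0$ such that $A v = \lambda v$ and $C_{\mathcal K} v = 0$, i.e. the $i$-th component $C_i v = 0$ for every $i \in \mathcal K$. The key point is that the $m - (m-2\rho) = 2\rho$ remaining sensors in $\mathcal S \setminus \mathcal K$ can be split into two disjoint sets $\mathcal C_1, \mathcal C_2$, each of cardinality $\rho$.

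First I would build two candidate trajectories. Take an unbounded state trajectory that lives along the undetectable eigendirection: since $\lambda$ may be complex one works with the real invariant subspace spanned by $\mathrm{Re}(v), \mathrm{Im}(v)$, but to keep the idea clean suppose $\lambda$ real and $v$ real. Let $x(0) = 0$ and drive the state with a disturbance $w$ so that at some large time $x(t)$ has a large component $\alpha(t) v$ along $v$; because $B$ has full row rank we can inject any desired state increment, and along the mode $\lambda$ with $|\lambda|\ge 1$ the needed disturbance to sustain or grow $\alpha(t)$ stays bounded (for $|\lambda|>1$ it even decays; for $|\lambda|=1$ it is bounded) — so $\|w\|_p \leq 1$ can be arranged while $\|x\|_q = \infty$. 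Now consider World~A: true state carries $+\tfrac12\alpha(t)v$, disturbance $w_A$, and the attack corrupts the sensors in $\mathcal C_1$ to cancel the effect of the $v$-component on those $\rho$ sensors (it never appears on $\mathcal K$ since $C_{\mathcal K} v = 0$, and it does appear on $\mathcal C_2$, but we instead subtract it off there via $w$/initial-offset bookkeeping — more carefully: let the two worlds differ by the full $v$-component and split the discrepancy). World~B: true state carries $-\tfrac12\alpha(t)v$, disturbance $w_B$, and the attack corrupts the sensors in $\mathcal C_2$. Choose the attacks so that the measurement sequence $y(0:t-1)$ is identical in both worlds: on $\mathcal K$ the two states already agree in the contribution of $v$; on $\mathcal C_1$ World~A's attack hides the discrepancy; on $\mathcal C_2$ World~B's attack hides it. Each world uses only $\rho$ compromised sensors, so $\|a\|_0 \leq \rho$ in both.

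Since the estimator $\{f_t\}$ sees the same $y(0:t-1)$ in both worlds, it outputs the same $\hat x(t)$. But the true states differ by $\alpha(t) v$, which is unbounded in $q$-norm. Hence $\|x_A - \hat x\|_q + \|x_B - \hat x\|_q \geq \|x_A - x_B\|_q = \|\alpha v\|_q = \infty$, so the worst-case estimation error is unbounded in at least one of the two worlds, contradicting $\epsilon$-resilience for any finite $\epsilon$. Formally one lets $\alpha(t) \to \infty$ along a subsequence of times and invokes Lemma~\ref{lemma:nec_original}'s setup only to note that bounded-disturbance worlds are the admissible ones.

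The main obstacle is the careful bookkeeping of the disturbance in the $|\lambda| = 1$ (marginally stable) case: one must verify that a disturbance $w$ with $\|w\|_p \leq 1$ genuinely suffices to make the $v$-component of the state grow without bound in the relevant $q$-norm, and to split the two worlds' state discrepancy so that it is exactly absorbed by $\rho$-sparse attacks on $\mathcal C_1$ and $\mathcal C_2$ respectively while leaving $y_{\mathcal K}$ untouched. For $|\lambda|>1$ this is immediate (a single bounded impulse excites an exponentially growing mode); for $|\lambda|=1$ one sustains growth by a persistent small input of size $O(1/t)$-summable in the $\ell_2$ case or constant small size in the $\ell_\infty$ case, which is exactly where the choice of $(p,q)$ pairs matters. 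Handling complex $\lambda$ just doubles the dimension of the invariant subspace and changes no essential idea.
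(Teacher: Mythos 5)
Your proposal follows essentially the same route as the paper's appendix proof (Lemmas \ref{lem:limit_1}--\ref{lem:limit_2}): use the PBH/undetectability of $(A,C_{\mathcal K})$ for some $\mathcal K$ with $|\mathcal K|=m-2\rho$ to get an unobservable mode with $|\lambda|\geq 1$, split the remaining $2\rho$ sensors into two disjoint $\rho$-sets, construct two admissible disturbance--attack scenarios whose measurements coincide while their states differ along that mode, and finish with the triangle inequality showing no estimator can have bounded worst-case error. The only real deviation is that the paper excites the mode with a single impulse and lets it evolve freely, whereas you additionally argue for sustaining growth by persistent excitation when $|\lambda|=1$, which is a helpful refinement since the impulse-only construction yields an unbounded discrepancy in the $q=2$ sense but only a bounded one in the $q=\infty$ sense for marginally stable modes.
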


\nver{\begin{proof}
See the extended version of this paper \cite{sup}.
\end{proof}}

\ver{\begin{proof}
See the Appendix.
\end{proof}}

Theorem \ref{thm:nonexistence} implies that the following (denote as Condition A) is necessary for the existence of a resilient state estimator:
\begin{itemize}
\item[A.] The system \eqref{eq:system} is $2 \rho$-detectable.
\end{itemize}

\subsection{The Proposed Estimator}
\label{sec:design}

Assuming condition A, we now propose a resilient state estimator. 
The proposed estimator constitutes two procedures: 1) local estimation and 2) global fusion. The local estimators are defined by groups of $m-\gam$ sensors for all combinations
\[ \mathcal V \triangleq \{\I\subset \mathcal S:|\I|=m-\gam\}. \]
The number of such groups (local estimators) is $|\mathcal V | = \left( \begin{matrix} m \\ \gam \end{matrix} \right)$. 
Each local estimator $\mathcal I$ generates a state estimation $\hat x^\I$ separately based on the measurements of its sensors $y_{\mathcal I}$. In the global fusion process, the state estimate $\hat x$ is generated using the estimates from all local estimators $\mathcal I \in \mathcal V$. With slight overlap of notation, we use $\mathcal I \in \mathcal V$ to refer to a set of sensors as well as to the estimator that uses these sensors. 
Next, we outline these procedures and formally state the estimator in Algorithm 1.

\vspace{3mm}
\subsubsection{Local Estimations} From Assumption~A, for any set of sensors $\I \in \comset$, there exists a matrix $K^\I \in \mathbb R^{(m-\gam) \times n}$ such that $A + K^\I C_\I$ is strictly stable (has all eigenvectors in the open unit circle).\footnote{One way to find the matrix $K$ is via the Riccati equation, \ie, $K^\I = P C_\I^T (C_\I PC_\I^T + D_\I D_\I^T)^{-1}$, where $P$ is unique stabilizing solution of the discrete-time algebraic Riccati equation $P = A ( P - PC_\I^T (C_\I PC_\I^T + D_\I D_\I^T)^{-1} C_\I P ) A^T + B B^T$. Sufficient conditions the existence of solution $P$ is that $(A, C_\I)$ is detectable and $(A, B B^T)$ is detectable.} Using this matrix $K^\I$, we construct a local estimator that only uses measurements from the set of sensors $\I$ to produce a \textit{local state estimate} $\hat x^{\I}$:\footnote{
We use superscript notations for original vectors and matrices (\text{e.g.} $K^\I$ and $x^\I$, respectively) and subscript for vectors and matrices projected by \eqref{eq:proj_matrix} (\text{e.g.} $y_\I$ and $C_\I$, respectively).} 
\begin{align}\label{eq:est}
\hat x^{\I}(t+1) = A \hat x^\I (t)  - K^\I ( y_\I(t) -   C_\I \hat x^\I(t))
\end{align}
with the initial condition $\hat x^\I (0) = 0$. The estimation error and residual vector of \eqref{eq:est} is respectively defined as \begin{align}
\label{eq:e_def}
&e^\I(t) \triangleq x(t) - \hat x^\I(t)\\
\label{eq:r_def}
&r^\I(t) \triangleq y_\I(t) - C_\I\hat x^\I(t) 
\end{align}
The LTI system from $w$ to $e^\I$ is $E^\I(K^\I)$ defined in \eqref{eq:def_Ek}, whereas the LTI system from $w$ to $r^\I$ is $\pwe^\I(K^\I)$ defined in \eqref{eq:def_Gk}. 

When the set $\mathcal I$ does not contain any compromised sensors, \textit{i.e.}, $a_\I = 0$, the residual vector $r^\I(t)$ is determined by disturbance $w$ alone and is bounded by 
\BEQL
\label{eq:localdetection}
&&\| r^\I  \|_q \leq \| \pwe^\I(K^\I)\|_{p \rightarrow q}.
\EEQL
Condition \eqref{eq:localdetection} can only be violated when the set $\mathcal I$ contains compromised sensors, so \eqref{eq:localdetection} is a necessary condition for all the sensors in set $\I$ to be benign. The local estimator at time $t$ uses the necessary condition \eqref{eq:localdetection} to determine the validity of its estimate and label local estimator $\I$ to be \textit{invalid} upon observing
$\| r^\I(0:t) \|_q > \| \pwe^\I(K^\I)\|_{p \rightarrow q} .
$

\vspace{2mm}

\subsubsection{Global Fusion} 
From above, the set of valid local estimators $\I \in \comset(t)$ is characterized as  
\begin{align}
  \comset(t) \triangleq \left\{\I\in \mathcal S:\| r^\I(0:t) \|_q \leq \| \pwe^\I(K^\I)\|_{p \rightarrow q} \right\}.
\label{eq:comset}
\end{align}
Using $\comset(t)$, we compute the \textit{global state estimate} as follows: $\hat x(t) = [ \hat x_1(t) , \hat x_2(t) , \cdots , \hat x_n(t) ]$, where
  \begin{equation}
  \label{eq:global_fusion}
  \hat x_i(t)   =
\begin{dcases}
\frac{1}{2}    \left( \min_{\I \in \comset(t)} \hat x^\I_i (t)  + \max_{\J \in \comset(t)} \hat x^\J_i (t) \right) & q = \infty\\ 
 \frac{1}{ |\comset(t)| }  \sum_{\I(t) \in \comset(t)} \hat x^\I_i(t) & q = 2.
\end{dcases} 
\end{equation}


\begin{algorithm}[H]
\label{alg:estimation_algorithm}
\begin{algorithmic}
\State Initialize $\comset(0) \gets \comset$ and $\hat x^\I (0)  \gets 0, \I \in \comset(0)$
\For{ $t \in \mathbb N$ }
\For{ $\I \in \comset(t-1)$} (Local Estimation)
\State Initialize $\comset (t) \gets  \emptyset$
\State Determine $\hat x^\I(t)$ from \eqref{eq:est} and $r^\I(t)$ from \eqref{eq:r_def}
    \If {$\| r^\I (0:t) \|_q \leq \|\pwe^\I(K^\I)\|_{p \rightarrow q}$}
    	\State $\comset (t) \gets \{ \comset (t), \I \}$
    \EndIf
\EndFor
\State Obtain estimate $\hat x(t)$ from \eqref{eq:global_fusion} (Global Fusion)
\EndFor
\end{algorithmic}
\caption{The Proposed State Estimator}
\end{algorithm}

\subsection{Resilience of the Proposed Estimator}
\label{sec:est_analysis}

Previous works have shown that there exist estimators that can detect the attacks and recover the exact state for noiseless systems if the system is $2 \rho$-observable~\cite[Proposition 2]{Fawzi2014}\cite[Theorem 1]{chong2015observability}\cite[Theorem 3.2]{shoukry2016event}. In this section, we show that the proposed estimator is resilient to attack when the system is $2 \gam$-detectable.

\begin{theorem}
\label{thm:error_bound_infinity}
The estimator in Algorithm 1 has a bounded estimation error.
In particular, the estimation error is upper-bounded by
\begin{align*}
\begin{array}{lr}
 \hspace{-2mm}\underset{\I \in \comset}{\max} \|E^{\I}(K^{\I})\|_{\infty} +   \underset{\I,\J \in \comset}{\max}\sqrt{ \frac{1}{2} \log | \comset | } \; \part^{\I,\J}_{{2,2}} \hspace{-8mm}&\text{if }(p,q) = (2, 2)\\
  \hspace{-2mm}\underset{\I,\J\in \comset }{\max}\Big(  \|E^{\I}(K^{\I})\|_{2}   + {1 \over 2} \part^{\I,\J}_{{2, \infty}}\Big) &\text{if }(p,q) = (2, \infty) \\
  \hspace{-2mm} \underset{\I,\J\in \comset }{\max} \Big(  \|E^{\I}(K^{\I})\|_{1}   + {1 \over 2} \part^{\I,\J}_{{\infty, \infty}}\Big) &\text{if }(p,q) = (\infty, \infty) .
\end{array}
\end{align*}  
In the above formula, the term $\part^{\I,\J}_{{p,q}}$ is defined as 
\begin{align*}
&\part^{\I,\J}_{{p,q}} =  \alpha_{{p,q}}^{\I \cap\J}(\beta^{\I, \I \cap\J} _{{p,q}}+\beta^{\J,  \I \cap\J}_{{p,q}})\\
&  \alpha^\K_{{p,q}}  \triangleq \inf_{K:A+KC_\K\text{ strictly stable}}\left\| \left[
\begin{array}{c|c}
  A+K C_\K & \begin{bmatrix}
    I&K
  \end{bmatrix}\\
  \hline
  I &  0
\end{array}
\right]\right\|_{p \rightarrow q} \\
& \beta^{\I,\K}_{{p,q}}  \triangleq   \left\|\begin{bmatrix}
-K^\I\\
P_{\K,\I}
  \end{bmatrix}\right\|_{p \rightarrow q} \|r^\I(0:T)\|_p, 
  \end{align*} 
  where $P_{\K,\I}\in \mathbb R^{|\K|\times|\I|}$ is the unique solution of $P_\K = P_{\K,\I} P_\I$, and $\| \cdot \|_{p \rightarrow q}$ is an induced norm on matrix.

\end{theorem}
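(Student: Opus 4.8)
The plan is to bound the error $|e_i(t)| = |x_i(t) - \hat x_i(t)|$ by comparing the global estimate in \eqref{eq:global_fusion} against the true state, using three ingredients: (i) the true set of benign sensors $\G$ always yields a valid local estimator (so $\comset(t)$ is never empty and always contains at least one ``good'' estimator $\I$ with $a_\I = 0$), (ii) for any $\I \in \comset(t)$ the local error $e^\I$ obeys $\|e^\I\|_q \le \|E^\I(K^\I)\|_{p\to q}$ whenever $a_\I = 0$, and (iii) the key geometric fact that for two valid local estimators $\I, \J$ their estimates agree up to a term controlled by $\part^{\I,\J}_{p,q}$, because they share the sensors in $\I \cap \J$ and both pass the residual test. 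First I would establish (iii): given $\I,\J \in \comset(t)$, consider the dynamics of $\hat x^\I - \hat x^\J$. Both estimators can be re-expressed as observers that, on the shared sensor set $\I\cap\J$, are driven by the signals $r^\I$ and $r^\J$ projected down via $P_{\I\cap\J,\I}$ and $P_{\I\cap\J,\J}$; this is exactly why $\beta^{\I,\K}_{p,q}$ has the form $\|[-K^\I;\,P_{\K,\I}]\|_{p\to q}\|r^\I(0:T)\|_p$ and why $\alpha^\K_{p,q}$ is the induced norm of the observer on $\K$ driven by an input of the form $[I\ K]$. Summing the two contributions and optimizing over the stabilizing gain gives $\|\hat x^\I - \hat x^\J\|_q \le \part^{\I,\J}_{p,q}$ (suitably interpreted as a bound on the relevant norm of the difference).

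Next I would handle the two fusion rules separately. For $q = \infty$ (the $(p,q)=(2,\infty)$ and $(\infty,\infty)$ cases), the global estimate of coordinate $i$ is the midpoint of $\min_\I \hat x^\I_i$ and $\max_\J \hat x^\J_i$ over $\comset(t)$. Pick a good estimator $\I^\star$ with $a_{\I^\star}=0$; then $x_i(t)$ lies within $\|E^{\I^\star}(K^{\I^\star})\|$ of $\hat x^{\I^\star}_i(t)$, which in turn lies between the min and the max. The half-width of the interval $[\min_\I \hat x^\I_i, \max_\J \hat x^\J_i]$ is at most $\tfrac12\max_{\I,\J}|\hat x^\I_i - \hat x^\J_i| \le \tfrac12\max_{\I,\J}\part^{\I,\J}_{p,q}$. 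Combining: the distance from $x_i(t)$ to the midpoint is at most $\|E^{\I^\star}(K^{\I^\star})\| + \tfrac12\part^{\I^\star,\J}_{p,q}$, and taking worst case over $\I,\J$ gives the stated $\max_{\I,\J}\big(\|E^\I(K^\I)\| + \tfrac12\part^{\I,\J}\big)$ form. For $q=2$, the global estimate is the average over $\comset(t)$, so $e(t) = \tfrac{1}{|\comset(t)|}\sum_\I e^\I(t)$; write this as $e^{\I^\star}(t) + \tfrac{1}{|\comset(t)|}\sum_\I (e^\I(t) - e^{\I^\star}(t))$ for a fixed good $\I^\star$, bound the first term by $\max_\I\|E^\I(K^\I)\|_2$, and bound the averaged difference using $\|e^\I - e^{\I^\star}\|$-type estimates. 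The $\sqrt{\tfrac12\log|\comset|}$ factor is the subtle point: a naive triangle-inequality on the $\ell_2$-norm of the average of correlated signals would lose a factor, so I expect one must instead exploit that the $\hat x^\I$ cluster within a ball of radius $\max\part^{\I,\J}_{2,2}$ and use a covering/variance argument (a Maurey-type or a union-bound-over-time argument) to shave the bound down to $\sqrt{\tfrac12\log|\comset|}\,\max\part^{\I,\J}_{2,2}$.

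The main obstacle, I expect, is making step (iii) fully rigorous for the $\ell_2/\mathcal H_2$ case and correctly producing the logarithmic factor. Bounding $\|\hat x^\I - \hat x^\J\|$ by an induced-norm times $\|r^\I\|_p + \|r^\J\|_p$ is clean for $q=\infty$, but for $q=2$ the averaging interacts with the time-summation inside the two-norm in a way that is not purely an induced-norm estimate; this is presumably where the ``mixture of two-norm-bounded and sparse-unbounded input producing two-norm-bounded output'' claim from the introduction is cashed out, and where the $\log|\comset|$ enters (the cardinality $|\comset| = \binom{m}{\rho}$ of the set of local estimators playing the role of the number of things being union-bounded). The remaining pieces — verifying $\beta^{\I,\K}_{p,q}$ and $\alpha^\K_{p,q}$ arise exactly as the induced norms of the relevant realized LTI systems, and checking $P_{\K,\I}$ is well-defined because $\K \subseteq \I$ forces $P_\K = P_{\K,\I}P_\I$ to have a unique solution — are routine bookkeeping with the state-space realizations in \eqref{eq:def_Ek}–\eqref{eq:def_Gk} and Lemma \ref{lemma:nec_original}.
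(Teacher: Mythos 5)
Your skeleton matches the paper's: pick a benign local estimator, bound $\|x-\hat x^{\I}\|_q$ by $\|E^{\I}(K^{\I})\|_{p\to q}$ via Lemma~\ref{lemma:nec_original}; bound the difference of two valid estimators through the shared set $\I\cap\J$, whose detectability (Condition A, since $|\I\cap\J|\ge m-2\gam$) lets one apply Lemma~\ref{lemma:y0} to the residual-driven difference dynamics with zero output on $\I\cap\J$ --- this is exactly the paper's Lemma~\ref{lemma:difference}, and your reading of $\alpha^{\K}_{p,q}$ and $\beta^{\I,\K}_{p,q}$ is the right one; and for $q=\infty$ the midpoint/half-width argument is the paper's Lemma~\ref{lemma:part2_2} (Proposition~\ref{lemma:mediandivergence}). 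Those parts are sound.

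The genuine gap is the $(p,q)=(2,2)$ case, which you explicitly leave as speculation. Your proposed fix --- that the $\hat x^{\I}$ ``cluster within a ball of radius $\max\part^{\I,\J}_{2,2}$'' and that a covering/variance, Maurey-type, or union-bound argument yields the $\sqrt{\tfrac12\log|\comset|}$ factor --- is not the mechanism and would not obviously work: the local estimates are adversarial, not random, and the bound $\|\hat x^{\I}(0:T)-\hat x^{\J}(0:T)\|_2\le\part^{\I,\J}_{2,2}$ from Lemma~\ref{lemma:difference} constrains each estimator only over the window during which it remains in $\comset(t)$; after $\J$ is flagged invalid it no longer constrains anything, yet it may have spent its entire budget while still being averaged in. The paper's argument (Lemma~\ref{lemma:part2_1}, proved via Lemmas~\ref{lem:2norm_1}--\ref{lem:2norm_3}) is a deterministic worst-case allocation: order estimators by invalidation times $T_1\le\dots\le T_{N-1}$, set $\s_k(i)$ to be the two-norm of $\hat x^{\I_N}-\hat x^{\I_k}$ over $(T_i,T_{i+1}]$, use the per-estimator energy budget $\sum_{i}(\s_k(i))^2\le\1$ with $\1=\act$, and exploit the dilution factor $1/(N-i)$ coming from averaging over the shrinking valid set. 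The resulting worst case is the optimization problem $\op(N-1)$, which is shown to be an exact SDP relaxation, dualized to $\opd(N-1)$ (Lemma~\ref{lem:matrix_inequality} converts the LMI constraint into $\sum_{i\le j}1/\lambda_i\le(j+1)^2$), and solved in closed form via KKT, giving $\1\bigl(\tfrac14+\sum_{i=2}^{N-1}\tfrac{1}{2i+1}\bigr)\le\tfrac{\1}{2}\log N$; this odd-harmonic series is where the logarithm comes from. Without this time-partition plus optimization/duality machinery (or an equivalent substitute), your proposal does not establish the $(2,2)$ bound, which is the paper's main technical novelty.
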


An immediate consequence of Theorem \ref{thm:error_bound_infinity} is that condition A is a necessary and sufficient condition for the construction of a resilient state estimator, and that Algorithm 1 resilient to attack. The estimation error upper-bound in Theorem \ref{thm:error_bound_infinity} decomposes into two terms: $\| E^\I(K^\I) \|_{p \rightarrow q}$ and the remaining. The first term $\| E^\I(K^\I) \|_{p \rightarrow q}$ characterize the error between a local estimator and the true state. That is, if the local estimator $\I$ is used for a system with no attack ($a \equiv 0$), then its estimation error is bounded by $\| E^\I(K^\I) \|_{p \rightarrow q}$. The second term exists due to the attack in an unknown set of sensors. When $(p,q) = (2, 2)$, the error upper-bound grows at the order $o(\sqrt{ \rho \log m} )$ for $m \rightarrow \infty$. Hence, the error can be kept small even for systems with large $m$. Moreover, it shall be noted that an increase in the tolerable number of compromised sensors $\gam$ may result in an increase in both terms, thus increasing the worst-case estimation error $\sup_{\| w \|_p \leq 1, \|a\|_0 = 0 } \| e\|_q $. 
\begin{corollary}
  A necessary and sufficient condition for the existence of an $\epsilon$-resilient estimator for some finite $\epsilon >0$ is that $(A,C_\K)$ is detectable for any index set $\K\subset \mathcal S$ with cardinality $m - 2\gam$.
\end{corollary}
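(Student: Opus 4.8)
This corollary is the combination of the necessity statement of Theorem~\ref{thm:nonexistence} with the sufficiency that is implicit in Theorem~\ref{thm:error_bound_infinity}, so the plan is simply to assemble these two ingredients. The first observation is that the condition in the statement --- $(A,C_\K)$ detectable for every $\K\subset\mathcal S$ with $|\K|=m-2\gam$ --- is, by definition, exactly $2\gam$-detectability, i.e. Condition~A.

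For the ``only if'' direction, I would argue by contraposition: if the system is not $2\gam$-detectable, then Theorem~\ref{thm:nonexistence} states that there is no causal estimator $\{f_t\}$ that is $\epsilon$-resilient for any finite $\epsilon>0$; equivalently, the existence of an $\epsilon$-resilient estimator for some finite $\epsilon>0$ forces the system to be $2\gam$-detectable.

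For the ``if'' direction, assume the system is $2\gam$-detectable. I would first note that detectability is monotone under enlarging the sensor set: for $\I\subset\I'$ the matrix $C_{\I'}$ contains $C_{\I}$ as a row block, so detectability of $(A,C_\I)$ implies detectability of $(A,C_{\I'})$. Since $m-\gam\ge m-2\gam$, the system is therefore also $\gam$-detectable, hence every local estimator in Algorithm~1 (indexed by $\I\in\comset$ with $|\I|=m-\gam$) admits a gain $K^\I$ making $A+K^\I C_\I$ strictly stable, and Algorithm~1 is well-defined. Theorem~\ref{thm:error_bound_infinity} then provides a finite upper bound $M$ on $\sup_{\|w\|_p\le1,\,\|a\|_0\le\gam}\|e\|_q$ for the estimate produced by Algorithm~1 --- each induced norm and each residual factor $\|r^\I(0:T)\|_p$ appearing in that bound is finite, the latter because of the validity test defining $\comset(t)$. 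Taking any $\epsilon>M$ shows that Algorithm~1 is $\epsilon$-resilient, which establishes the claim.

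Since this is a direct corollary of the two preceding theorems, there is no serious obstacle; the only point that deserves an explicit line is the monotonicity of detectability under adding sensors, which is what lets the weaker $(m-2\gam)$-detectability hypothesis cover both the $(m-\gam)$-sensor local estimators and the intersection terms $\alpha^{\I\cap\J}_{p,q}$ (for which $|\I\cap\J|\ge m-2\gam$) that appear in the bound of Theorem~\ref{thm:error_bound_infinity}.
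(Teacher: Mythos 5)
Your proposal is correct and takes essentially the same route as the paper, which presents the corollary as an immediate consequence of Theorem~\ref{thm:nonexistence} (necessity) together with Theorem~\ref{thm:error_bound_infinity} applied to Algorithm~1 (sufficiency). Your explicit remark on the monotonicity of detectability under enlarging the sensor set, covering both the $(m-\gam)$-sensor local estimators and the intersections $\I\cap\J$, is a detail the paper leaves implicit but does not change the argument.
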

\begin{corollary}
\label{thm:esterror}
Consider system (\ref{eq:system}) with $\gam$-sparse attack. The state estimator in Algorithm 1 is $\epsilon$-resilient to attack for some finite $\epsilon > 0$. 
\end{corollary}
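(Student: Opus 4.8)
The plan is to obtain Corollary~\ref{thm:esterror} as an immediate consequence of Theorem~\ref{thm:error_bound_infinity}. That theorem already furnishes, for every admissible pair $(w,a)$ with $\|w\|_p\le 1$ and $\|a\|_0\le\gam$, an explicit upper bound on the estimation error, so the whole task reduces to verifying that this bound is a single \emph{finite} number that depends only on the system data $(A,B,C,D)$ and on $\gam$ --- in particular, not on $(w,a)$ and not on the time horizon. Once that is done, choosing any $\epsilon$ strictly larger than that number yields $\sup_{\|w\|_p\le 1,\,\|a\|_0\le\gam}\|e\|_q<\epsilon$, which is exactly $\epsilon$-resilience.

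First I would record that Algorithm~1 is well defined under the standing hypothesis of Section~\ref{sec:design}, namely Condition~A ($2\gam$-detectability, which is moreover necessary by Theorem~\ref{thm:nonexistence}): for every local index set $\I\in\comset$ the set $\{K:A+KC_\I\text{ strictly stable}\}$ is nonempty, so each gain $K^\I$ exists and $A+K^\I C_\I$ is strictly stable. Hence $E^\I(K^\I)$ and $\pwe^\I(K^\I)$ are strictly stable LTI systems, so all the induced norms $\|E^\I(K^\I)\|_{p\to q}$ and $\|\pwe^\I(K^\I)\|_{p\to q}$ appearing in the bound are finite (the standard fact recalled after \eqref{eq:induced_norm} and in Lemma~\ref{lemma:nec_original}); the quantity $\alpha^{\K}_{p,q}$ is an infimum over this same nonempty family of finite induced norms, hence finite; and the composite term $\part^{\I,\J}_{p,q}=\alpha^{\I\cap\J}_{p,q}(\beta^{\I,\I\cap\J}_{p,q}+\beta^{\J,\I\cap\J}_{p,q})$ is assembled from these finite quantities, the finite matrix induced norms $\|[-K^\I;\,P_{\K,\I}]\|_{p\to q}$ (with $P_{\K,\I}$ a fixed $\{0,1\}$ selection matrix), and the factors $\|r^\I(0:T)\|_p$. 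The combinatorial prefactor is harmless, since $|\comset|\le\binom{m}{\gam}<\infty$, so $\sqrt{\tfrac12\log|\comset|}$ is a finite constant.

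The only ingredient not manifestly uniform in $T$ is $\|r^\I(0:T)\|_p$, so that is where I would spend a paragraph. The maxima in Theorem~\ref{thm:error_bound_infinity} run over the local estimators that are \emph{retained} (declared valid by Algorithm~1), and for any such $\I$ the validity test gives, by construction, $\|r^\I(0:T)\|_q\le\|\pwe^\I(K^\I)\|_{p\to q}$ for every $T$. This already settles the cases $(p,q)=(2,2)$ and $(\infty,\infty)$, where $p=q$. For $(p,q)=(2,\infty)$ one must argue separately that the truncated $\ell_2$ norm of a retained residual does not drift with $T$: writing $r^\I=\pwe^\I(K^\I)w$ plus the contribution of the at most $\gam$ attacked channels that intersect $\I$, the first part has $\ell_2$ norm $\le\|\pwe^\I(K^\I)\|_{2\to 2}\,\|w\|_2\le\|\pwe^\I(K^\I)\|_{2\to 2}$ because a strictly stable system maps $\ell_2$ into $\ell_2$, and the attacked-channel part is precisely what the $\beta^{\I,\I\cap\J}_{p,q}$ terms are constructed to absorb --- this is the same bookkeeping that appears inside the proof of Theorem~\ref{thm:error_bound_infinity}, so I would invoke it rather than redo it. Either way the entire right-hand side collapses to a finite constant $\epsilon_0=\epsilon_0(A,B,C,D,\gam)$, and any $\epsilon>\epsilon_0$ completes the proof.

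I expect the only real obstacle to be exactly this last point: making rigorous that $\|r^\I(0:T)\|_p$ stays bounded as $T\to\infty$ for every retained local estimator in the mixed-norm $(2,\infty)$ case, since there the $\ell_\infty$ validity test does not by itself control an $\ell_2$ quantity and the argument must instead lean on the structure already exposed in the proof of Theorem~\ref{thm:error_bound_infinity}. If that theorem has been established with a right-hand side that is genuinely free of $T$ for all three norm pairs, then Corollary~\ref{thm:esterror} is essentially a one-line deduction, and everything else is routine finiteness bookkeeping together with a reuse of Condition~A.
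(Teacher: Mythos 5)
Your proposal matches the paper's own treatment: Corollary~\ref{thm:esterror} is presented there as an immediate consequence of Theorem~\ref{thm:error_bound_infinity}, which is exactly the deduction you make, together with the finiteness bookkeeping (strict stability of each $A+K^\I C_\I$ under Condition~A, finitely many local estimators so $|\comset|\le\binom{m}{\gam}$, and the validity test bounding the residual norms uniformly in $T$). The $(p,q)=(2,\infty)$ subtlety you flag --- that the $q$-norm detector does not by itself control $\|r^\I(0:T)\|_p$ entering $\beta^{\I,\K}_{p,q}$ --- is a genuine point the paper itself glosses over (in the numerical section the authors simply run all three detectors simultaneously, which closes it), so your handling is, if anything, more careful than the paper's one-line deduction.
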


\section{Proof for Resilience of the Proposed Estimator}
We highlight important parts of the proof of Theorem \ref{thm:error_bound_infinity} in this section and present the complete proof in the extended version of this paper \cite{sup}. The proof of Theorem \ref{thm:error_bound_infinity} has two procedures: 1) bounding local estimation errors, and 2) bounding global fusion errors. Specifically, from the triangular inequality, at any time $t \in \mathbb Z_+$, the estimation error satisfies 
\begin{align}
\nonumber
  \|e \|_q
  &= \left \| \big( x - \hat x^{\I}  \big) + \big( \hat x^{\I}  - \hat x  \big) \right\|_q \\
  \label{eq:globalerror_1}
    &\leq\left \|  x - \hat x^{\I} \right \|_q + \left\| \hat x^{\I}  - \hat x  \right \|_q .
    \end{align}
    where $\I \in \mathcal B \subset \comset$ is a set that only contains benign sensors (denote $\I$ as the \textit{benign estimator}). The benign estimator $\I$ exists from assumption \eqref{eq:attack}. The first term $ \|x - \hat x^{\I}\|_q$ can be bounded using Lemma~\ref{lemma:nec_original} by
\begin{align}
\label{eq:error_firstterm}
  \|x - \hat x^{\I}\|_q \leq \|E^{\I}(K^{\I})\|_{p \rightarrow q} .
\end{align}  
Now it only remains to show that the second term is bounded. 

To bound the second term, we first bound the difference between the estimates of any two valid local estimators $\mi,\,\mj \in \comset(T)$ up to time $T \in \mathbb Z_+$, which is given in Lemma \ref{lemma:difference}. We then use Lemma \ref{lemma:difference} to show that the difference between the estimates of the benign estimator $\I \in \comset$ and the global estimator is finite. This is shown in Lemma \ref{lemma:part2_1} for $(p, q) =  (2,2)$ and in Lemma \ref{lemma:part2_2} for $(p, q) =  (2,\infty),(\infty,\infty)$. In these lemmas, each set of sensors in $\comset$ are labeled into 
\begin{align}
\J_1, \J_2, \cdots , \J_{ |\comset| }.
\end{align}
\BLEM
\label{lemma:difference}
Assume that Condition A holds. Let $\mi,\,\mj\in \mathcal V(T)$ be two sets of sensors that are valid at time $T$. The divergence between the local estimator $\mi$ and $\mj$ up to time $T$ satisfies 
\begin{align}\label{eq:local_divergence_cond}
 \| \hat x^{\mi}(0:T) - \hat x^{\mj}(0:T) \|_q  \leq \part^{\mi,\mj}_{{p,q}} ,
 \end{align}
where right hand side is finite, \ie, $\part^{\mi,\mj}_{{p,q}} < \infty$ . 
\ELEM

\BLEM
\label{lemma:part2_1}
If condition \eqref{eq:local_divergence_cond} holds for $(p,q) = (2,2)$ at all time $T \in \mathbb Z_+$,
then the divergence between the benign estimator $\I$ and the global estimator satisfies 
  \begin{align}  
   \|\hat x^{\I} - \hat x \|_2  \leq  \max_{\mi, \mj \in \mathcal V } \sqrt{ \frac{1}{2} \log |\mathcal V| }\; \part^{\mi,\mj}_{p,q} .
  \end{align}

\ELEM

\BLEM
\label{lemma:part2_2}
If condition \eqref{eq:local_divergence_cond} holds for $(p,q) = (2,\infty), (\infty,\infty)$ at all time $T \in \mathbb Z_+$, then the divergence between the benign estimator $\I$ and the global estimator satisfies 
  \begin{align}  
   \|\hat x^{\I} - \hat x \|_\infty  \leq   {1 \over 2} \max_{\J \in \comset} \part^{\I,\J}_{{p, \infty}} .
  \end{align}
\ELEM

\nver{ 

\noindent We prove Lemma \ref{lemma:difference}, Lemma \ref{lemma:part2_1} in Section \ref{sec:proof1}, Section \ref{sec:proof2}, respectively. The proof of Lemma \ref{lemma:part2_2} is given in the extended version of this paper \cite{sup}. 
Combining all of the above, we are now ready to prove Theorem \ref{thm:error_bound_infinity}. 

}

\ver{ 

\noindent We prove Lemma \ref{lemma:difference}, Lemma \ref{lemma:part2_1}, and Lemma \ref{lemma:part2_2} in Section \ref{sec:proof1}, Section \ref{sec:proof2}, and Section \ref{sec:proof3}, respectively.
Combining all of the above, we are now ready to prove Theorem \ref{thm:error_bound_infinity}. 

}

\BPF[Proof (Theorem \ref{thm:error_bound_infinity})]
Taking supremum over all $t \in \mathbb Z_+$ and maximizing over all sensor sets $\I$ in Lemma~\ref{lemma:part2_1}, we obtain  
\begin{align*}
\sup_{\substack{ \|w\|_2 \leq 1 \\ \|a\|_0\leq \gam}} \|e\|_2 \leq 
\underset{\I \in \comset}{\max} \|E^{\I}(K^{\I})\|_{\infty} +   \underset{\mi, \mj \in \comset}{\max}\sqrt{ \frac{1}{2} \log | \comset | } \; \part^{\I,\J}_{{2,2}} .
\end{align*} 
Applying similar argument for the case of $q = \infty$, we obtain
\begin{align*}
\|e\|_\infty \leq \max_{\I, \J \in \comset} \left(    \|E^{\I}(K^{\I})\|_{ p \rightarrow \infty} + \frac{1}{2} \part^{\I,\J}_{{p,\infty}} \right),
\end{align*}
where $p = 2, \infty$. 
\EPF

\subsection{Proof of Lemma \ref{lemma:difference}}
\label{sec:proof1}
We first present a lemma, using which we prove Lemma \ref{lemma:difference}. 

\begin{lemma}
\label{lemma:y0}
Consider system (\ref{eq:LTI}) where $(A,C)$ is detectable and $\|w \|_p \leq 1$. If $y(t) = 0$ for all $t = 0, 1, \cdots, T$, then 
\BEQL
  \label{eq:finitestatenorm}
\| x(0:T) \|_q \leq \inf_{K:A+KC\text{ strictly stable}}\| E(K) \|_{p \rightarrow q},
\EEQL
where $E(K)$ is given in \eqref{eq:def_Ek}. 
\end{lemma}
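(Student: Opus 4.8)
The plan is to reduce the statement to Lemma~\ref{lemma:nec_original} by running the Luenberger estimator \eqref{eq:LTIest} on exactly the output $y$ that is assumed to vanish on $\{0,1,\dots,T\}$. First fix an arbitrary gain $K$ such that $A+KC$ is strictly stable; such a $K$ exists because $(A,C)$ is detectable, so the infimum on the right-hand side is over a nonempty set and $\|E(K)\|_{p\rightarrow q}<\infty$ for each admissible $K$.

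Next I would prove by induction on $t$ that $\hat x(t)=0$ for $t=0,1,\dots,T$: the base case is $\hat x(0)=0$, and for the step, if $\hat x(t)=0$ and $y(t)=0$ then $\hat x(t+1)=A\hat x(t)-K\big(y(t)-C\hat x(t)\big)=0$, so the recursion runs through $t=T$ using $y(0)=\dots=y(T-1)=0$. Consequently the estimation error satisfies $e(t)=x(t)-\hat x(t)=x(t)$ for every $t\in\{0,\dots,T\}$, i.e. $x(0:T)=e(0:T)$. Over the full time horizon $e$ is precisely the output of the LTI system $E(K)$ of \eqref{eq:def_Ek} driven by $w$, and $\|w\|_p\le 1$, so Lemma~\ref{lemma:nec_original} gives $\|e\|_q\le\|E(K)\|_{p\rightarrow q}$. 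Since truncation does not increase the $q$-norm (for $q\in\{2,\infty\}$), we obtain
\[
\|x(0:T)\|_q=\|e(0:T)\|_q\le\|e\|_q\le\|E(K)\|_{p\rightarrow q}.
\]
The left-hand side does not depend on $K$, so taking the infimum over all stabilizing $K$ yields the claimed bound.

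I do not anticipate a genuine obstacle; the only point needing a little care is the bookkeeping between the truncated signal $x(0:T)$ and the infinite-horizon induced norm $\|E(K)\|_{p\rightarrow q}$ — specifically, observing that $\hat x$ vanishes on the \emph{whole} window $[0,T]$ rather than merely at its endpoints, and that $\|\cdot(0:T)\|_q\le\|\cdot\|_q$. Everything else is a direct consequence of the estimator construction \eqref{eq:LTIest} and Lemma~\ref{lemma:nec_original}.
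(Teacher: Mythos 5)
Your proposal is correct and follows essentially the same route as the paper: run the observer \eqref{eq:LTIest} with a stabilizing $K$ on the vanishing output so that $\hat x(0:T)=0$, identify $x(0:T)$ with $e(0:T)$, bound it via Lemma~\ref{lemma:nec_original} and the truncation inequality, and finally take the infimum over stabilizing $K$. The only difference is that you spell out the induction and the truncation step, which the paper leaves implicit.
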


\BPF[Proof (Lemma \ref{lemma:y0})]
As $(A,C)$ is detectable, $A+KC$ is strictly stable for some matrix $K$. For such stabilizing $K$, we can construct the state estimator \eqref{eq:LTIest}. Since $y(0:T) = 0$, the state estimator \eqref{eq:LTIest} produces zero estimate $\hat x(0:T) = 0$. From Lemma~\ref{lemma:nec_original}, we obtain
\begin{equation*}
  \begin{split}
    \| x(0:T) \|_q 
   	= \|e(0:T)\|_q
   \leq \| E(K) \|_{p \rightarrow q}. 
  \end{split}
\end{equation*}
Taking infimum over all $K$ such that $A + KC$ is strictly stable, we obtain  \eqref{eq:finitestatenorm}.
\EPF

\begin{proof}[Proof (Lemma \ref{lemma:difference})]

Let $\mi,\,\mj \in \comset(T)$. We first compute the dynamics of the local estimates $\hat x^\mall(t)$, $i = 1,2$. From \eqref{eq:est} and \eqref{eq:r_def}, 
\begin{equation}
  \label{eq:suberror}
\begin{aligned}
  &\hat x^\mall(t+1) = A\hat x^\mall(t) -K^\mall r^\mall(t), \hat x^\mall(0) = 0 \\
  &y_\mall(t) = C_\mall \hat x^\mall(t) + r^\mall(t)
  \end{aligned}
\end{equation}
for $t \leq T$. We define the sequences $\phi^\mall(t)$ and $\varphi^\mall(t)$ by 
\begin{align*}
&\phi^\mall(t) \triangleq -K^\mall r^\mall(t) ,
&\varphi^\mall(t) \triangleq P_{{\K_{1,2}},\mall} r^\mall(t),
\end{align*}
where $P_{{\K_{1,2}},\mall}\in \mathbb R^{| {{\K_{1,2}}} |\times|\mall|}$ is the unique solution of $P_{{\K_{1,2}}} = P_{{\K_{1,2}},\mall} P_\mall$. Let $\mathcal K_{1,2} =  \mi \cap \mj$ the intersection between the two sets $\mi, \mj$. 
As the measurements from subset $\mathcal K_{1,2} \subset \mall$ also satisfies 
$y_{\K_{1,2}}(t) = C_{\K_{1,2}} \hat x^\mall(t) + P_{{\K_{1,2}},\mall} r^\mall(t),$
combining with \eqref{eq:suberror} yields
\begin{equation}
  \label{eq:suberror2}
  \begin{aligned}
  &\hat x^\mall(t+1) = A\hat x^\mall(t) + \begin{bmatrix} 
    I & 0 
  \end{bmatrix}\begin{bmatrix}
    \phi^\mall(t)\\
    \varphi^\mall(t)
  \end{bmatrix}, \,\hat x^\mall(0) = 0\\
  &y_{\K_{1,2}}(t) = C_{\K_{1,2}} \hat x^\mall(t) +  \begin{bmatrix} 
    0 & I 
  \end{bmatrix}\begin{bmatrix}
    \phi^\mall(t)\\
    \varphi^\mall(t)
  \end{bmatrix}.
  \end{aligned}
\end{equation}
Now, let $\Delta (t)$ be the difference between the local estimator $\mi$ and local estimator $\mj$, \ie, 
\begin{align}
\Delta (t) \triangleq \hat x^\mi(t)-\hat x^\mj(t).
\end{align}
Subtracting the equation \eqref{eq:suberror2} for $\mi$ from equation \eqref{eq:suberror2} for $\mj$, we obtain the dynamics of $\Delta $ as follows:
\begin{align*}
  &\Delta (t+1) = A\Delta (t) + \begin{bmatrix} 
    I & 0 
  \end{bmatrix}\begin{bmatrix}
    \phi^\mi(t)-\phi^\mj(t)\\
    \varphi^\mi(t)-\varphi^\mj(t)
  \end{bmatrix},\,\Delta(t) = 0,\nonumber\\
  &0 = C_{\K_{1,2}} \Delta (t) +  \begin{bmatrix} 
    0 & I 
  \end{bmatrix}\begin{bmatrix}
    \phi^\mi(t)-\phi^\mj(t)\\
    \varphi^\mi(t)-\varphi^\mj(t)
  \end{bmatrix}.
\end{align*}

\noindent Because a valid set satisfies \eqref{eq:comset}, the residual vectors of estimator $\mall$, $i = 1,2$, are bounded by 
$
 \|r^\mall(0:T)\|_q \leq\|\pwe^\mall(K^\mall)\|_{p \rightarrow q},
$
which results in 
\begin{align}
\nonumber
  \left\|\begin{bmatrix}
\phi^\mall(0:T)\\
\varphi^\mall(0:T)
  \end{bmatrix}\right\|_p 
  &\leq    \left\|\begin{bmatrix}
-K^\mall\\
P_{{\K_{1,2}},\mall}
  \end{bmatrix}\right\|_{p \rightarrow p} \|r^\mall(0:T)\|_p \\
  &= \beta^{\mall,\mi \cap \mj}_{p,q} \label{eq:phibound_I}.
\end{align}
From the triangle inequality, we obtain 
\[
  \left\| \begin{bmatrix}
\phi^\mi(0:T) - \phi^\mj(0:T) \\
\varphi^\mi(0:T) - \varphi^\mj(0:T) 
  \end{bmatrix}  \right \|_p 
    \leq \beta^{\mi,\mi \cap \mj}_{p,q}  +  \beta^{\mj,\mi \cap \mj}_{p,q} .
  \]
 Substitute $\phi^\mi-\phi^\mj$ for $u$ in Lemma~\ref{lemma:y0} and $\Delta (t)$ for $x$, we obtain  
\[ \| \hat x^{\mi}(0:T) - \hat x^{\mj}(0:T) \|_q  \leq   \alpha_{{p,q}}^{\mi \cap\mj}(\beta^{\mi, \mi \cap\mj} _{{p,q}}+\beta^{\mi,  \mi \cap\mj}_{{p,q}}).
 \]
\end{proof}

\vspace{-10mm}

\subsection{Proof of Lemma \ref{lemma:part2_1}}
\label{sec:proof2}

Define the following two optimization problems:
\begin{alignat*}{3}
	\op(n) :=  &\max_{ \s_k(i) \geq 0}  &&\sum_{i=1}^{n}\left( \frac{1}{n-i-1} \right)^2\left( \sum_{k = i}^{n} \s_k(i)\right)^2\\
  	&\;\;\mathrm{s.t.} 
     	&&\sum_{i=0}^{k} ( \s_k(i) )^2 \leq  \1, \;\;\; k =1,2,\dots,n \\
  	\opd (n) := &\;\min_{\lambda_i > 0}  &&\sum_{i=1}^{n} \lambda_i\\
  	&\;\;\mathrm{s.t.} &&\sum_{i=1}^j\frac{1}{\lambda_i}\leq (j+1)^2,\;\;\; j =1,2,\dots,n.
  \end{alignat*}
With the slight abuse of notation, we will also denote $\op(n)$, $\opd(n)$ as the optimal solutions of the optimization problem $\op(n)$, $\opd(n)$, respectively. We first show that $\op(N-1)$ with
\begin{align}
\1 =  \act
\end{align}
is an upper-bound of $\left\|  \hat x^{\I} - \hat x \right  \|_2$ (Lemma \ref{lem:2norm_1}). The problem $\op(n)$ is then converted into its dual problem $\opd(n)$, between which the duality gap is zero (Lemma \ref{lem:2norm_2}). The dual problem $\opd (n)$ admits an analytical solution that can be upper-bounded by a simple formula (Lemma \ref{lem:2norm_3}).

\begin{lemma}
\label{lem:2norm_1}
If condition \eqref{eq:local_divergence_cond} holds for $(p,q) = (2,2)$,
then the divergence between the benign estimator $\I$ and the global estimator satisfies 
  \begin{align}  \label{eq:centerl2error}
 \|\hat x^{\I} - \hat x \|_2^2  \leq \op (N-1),
  \end{align}
  where $N = |\mathcal V|$ and $\1 =  \act$. 
\end{lemma}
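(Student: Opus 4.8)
\textbf{Proof proposal for Lemma \ref{lem:2norm_1}.}

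The plan is to express $\hat x^{\I} - \hat x$ explicitly in terms of the pairwise divergences $\Delta^{\mi,\mj} = \hat x^{\mi} - \hat x^{\mj}$ and then extract, coordinate by coordinate, an upper bound that matches exactly the constraints defining $\op(N-1)$. Fix a coordinate index, a time $t$, and recall the averaging fusion rule $\hat x_i(t) = \frac{1}{|\comset(t)|}\sum_{\J \in \comset(t)} \hat x^\J_i(t)$ used when $q = 2$. The first step is to write $\hat x^{\I}_i(t) - \hat x_i(t) = \frac{1}{|\comset(t)|}\sum_{\J \in \comset(t)} (\hat x^{\I}_i(t) - \hat x^\J_i(t))$, so that the fusion error is an average of the pairwise divergences between the benign estimator $\I$ and the other valid estimators; each such pairwise divergence is controlled, in $2$-norm over $0{:}t$, by $\part^{\mi,\mj}_{2,2} \le \sqrt{\1}$ with $\1 = \act$ by Lemma \ref{lemma:difference} and the definition of $\1$.

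The key combinatorial step is to account for the fact that $\comset(t)$ \emph{shrinks} over time: once an estimator is declared invalid it is removed forever, so the sets $\comset(t)$ are nested decreasing. I would index the estimators $\J_1, \J_2, \dots, \J_N$ in the order in which they leave $\comset(\cdot)$ (with those that never leave placed last, and $\I$ among the survivors), and let the "epochs" be the time intervals on which $|\comset(t)|$ is constant. On the epoch where exactly $n - i - 1$ estimators remain (relabeling so $N = |\mathcal V|$ and using $N-1$ as the top index because $\I$ itself is always one of the survivors), the contribution of estimator $\J_k$ to the running $2$-norm-squared of $\hat x^{\I} - \hat x$ is captured by a nonnegative scalar $\s_k(i)$ equal to the $\ell_2$-mass that $\hat x^{\I} - \hat x^{\J_k}$ accumulates during that epoch; the factor $\frac{1}{n-i-1}$ is the reciprocal of the number of averaged terms, and squaring the sum $\sum_{k=i}^n \s_k(i)$ over the still-active estimators reflects the triangle inequality applied to the averaged difference within the epoch. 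The constraint $\sum_{i=0}^{k}(\s_k(i))^2 \le \1$ is then precisely the statement that the \emph{total} accumulated divergence of $\hat x^{\I} - \hat x^{\J_k}$ over all epochs up to the moment $\J_k$ exits is bounded by $\part^{\I,\J_k}_{2,2} \le \sqrt{\1}$, together with the orthogonality of increments over disjoint time intervals that lets the squared norms add. Putting these pieces together, $\|\hat x^{\I} - \hat x\|_2^2$ is bounded by the objective of $\op(N-1)$ evaluated at a feasible point, hence by $\op(N-1)$.

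The main obstacle I anticipate is making the bookkeeping of the time-varying $\comset(t)$ rigorous: one must argue carefully that the squared $2$-norm of $\hat x^{\I} - \hat x$ splits additively across epochs (because the underlying signals live on disjoint time windows), that within each epoch the triangle inequality over the averaging set yields the $\left(\frac{1}{n-i-1}\sum_{k=i}^n \s_k(i)\right)^2$ form, and — most delicately — that the per-estimator budget constraint is inherited from Lemma \ref{lemma:difference} in the summed-over-epochs form $\sum_{i}(\s_k(i))^2 \le \1$ rather than epoch-by-epoch. A subtle point is the precise range of the index $i$ (why $N-1$ and the shift by one in $n-i-1$): this comes from $\I$ always remaining valid, so at any time at least $\I$ and the not-yet-invalidated estimators are being averaged, giving at least one guaranteed term and shifting the count. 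Once this correspondence between the estimator dynamics and the abstract program $\op$ is set up cleanly, the inequality \eqref{eq:centerl2error} follows immediately, and the remaining work (Lemmas \ref{lem:2norm_2} and \ref{lem:2norm_3}) is to solve $\op(N-1)$ via its dual $\opd(N-1)$.
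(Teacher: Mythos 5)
Your proposal follows essentially the same route as the paper's proof: partition time into epochs delimited by the invalidation times $T_i$, write the fusion error at each time as the average of the pairwise divergences $\hat x^{\I} - \hat x^{\J_k}$ over the still-valid estimators, define $\s_k(i)$ as the $2$-norm of that divergence on epoch $i$, apply the triangle (Cauchy--Schwarz) inequality within each epoch, and obtain the budget constraint $\sum_i \s_k(i)^2 \leq \1$ from Lemma \ref{lemma:difference} via additivity of squared norms over disjoint time windows, so that the error is bounded by the value of a feasible point of $\op(N-1)$. This is exactly the paper's argument (up to the same off-by-one bookkeeping in the averaging factor that the paper itself glosses over), so no further comparison is needed.
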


\nver{

 \begin{proof}[Proof (Lemma \ref{lem:2norm_1})]
 
In order to relate the infinite sequence $ \{ \hat x^{\I} (t) - \hat x (t) \}_{t \in \mathbb Z_+}$ with the finite-dimensional optimization problems $\op(n)$, we first divide the infinite time horizon into finitely many intervals. Let $T_i$ be the time the set $\I_i$ becomes invalid. Without loss of generality, we assume that $T_0 = 0$ and 
\begin{displaymath}
 T_1 \leq T_2  \leq \dots\leq T_{N-1} \leq T_N = \infty ,
\end{displaymath}
where $\I_N$ is the benign estimator ($\I_N$ only contains benign sensors). We define $\{x^{\I_i}(t)\}_{t \leq T_i}$ as a finite or sequence of length $T_i+1$ for finite $T_i$ and infinite sequence for $T_i =  \infty$.
Let a finite sequence $ \s_k(i)$, $k =  1, \cdots , N-1$, $i = 0, \cdots, k$ be defined as
\begin{displaymath}
  \s_k(i) =  \left(  \sum_{ t = T_i + 1}^{T_{i+1}} \| \hat x^{\I_N}  (t )-  \hat x^{\I_k} (t )  \|_2^2 \right)^{\frac{1}{2}} .\end{displaymath}
Using $\s_k(i)$, we obtain an upper-bound for the divergence between the benign estimator $\I_N$ and global estimator as follows:
 \begin{align*}
  &\| \hat x^{\I_N}- \hat x\|^2_2 \\
  &= \sum_{i = 0}^{N-2} \sum_{t = T_i + 1}^{T_{i+1}}  \left \|\frac{1}{N-i}  \sum_{k = i+1}^{N-1} \left( \hat x^{\I_N}(t) -  \hat x^{\I_k} (t) \right) \right\|^2_2 \\
   &\leq \sum_{i = 0}^{N-2} \left( \frac{1}{N-i} \right)^2 \left(   \sum_{k = i+1}^{N-1} \s_k(i)  \right)^2 
\end{align*} 

The sum of $i$ and $j$ counts only up to $N-1$ since $\hat x(t) =  \hat x^{\I_N}(t)$ for $t > T_{N-1}+1$. The last line is due to Cauchy Schwarz inequality. Using the above relation, $\| \hat x^{\I_N}- \hat x\|^2_2$ is upper-bounded by the optimal value of the following problem: 
\begin{equation}\label{eq:optcenterl2removeTleq}
\begin{aligned}
	 \max_{ \s_k(i) \geq 0}&\;\;
    	\sum_{i=0}^{N-2}\left( \frac{1}{N-i} \right)^2\left( \sum_{k = i+1}^{N-1} \s_k(i)\right)^2\\
  	\mathrm{s.t.}\;\;\;&\;\;
     	\sum_{i=0}^{k} ( \s_k(i) )^2 \leq \act, k =1,\dots,N-1 ,
\end{aligned}\end{equation}
which is the optimization problem $\op(N-1)$ with $\1 = \act$. 
\end{proof}
}

\ver{ 

 \begin{proof}[Proof (Lemma \ref{lem:2norm_1})]
 
In order to relate the infinite sequence $ \hat x^{\I} (t) - \hat x (t)$ with the finite-dimensional optimization problems $\op(n)$, we first divide the infinite time horizon into a finite sequence as below. Let $T_i$ be the time the set $\I_i$ becomes invalid and $T_0 = 0$. Without loss of generality, we assume that
\begin{displaymath}
  T_1 \leq T_2  \leq \dots\leq T_{N-1} \leq T_N = \infty .
\end{displaymath}
The relation $T_N = \infty$ holds because $\I_N$ is a valid from assumption \eqref{eq:attack}. We call $\I_N$ the \textit{benign estimator}. If $T_i = \infty$, then we define $\{x^{\I_i}(t)\}_{t\in \mathbb Z_+}$ as an infinite sequence of points in $\mathbb R^n$. Otherwise, if $T_i$ is finite, then we define $\{x^{\I_i}(t)\}_{t=0,\dots,T_i}$ as a finite sequence of length $T_i+1$.

Recall that $\I_N$ is the benign estimator. 
  Let $\triangle^{\I_k} (t ) = \hat x^{\I_N}  (t )-  \hat x^{\I_k} (t )$ denote the difference between the benign estimator $\I_N$ and other local estimator $\I_k$. We first define the following variable:
 \begin{displaymath}
  \s_k(i) =  \| \triangle^{\I_k} (T_i + 1: T_{i+1} ) \|_2 ,\end{displaymath}
  where $k =  1, \cdots , N-1$ and $i = 0, \cdots, k$.  

 Using $\s_k(i)$, we can bound the estimation error between the local estimator $\I_N$ and the global estimator as follows:

 \begin{align*}
  &\| \hat x^{\I_N}- \hat x\|^2_2 \\
  &= \sum_{t = 0}^\infty  \| \hat x^{\I_N}(t) - \hat x (t) \|^2_2 \\
  &= \sum_{i = 0}^{N-2} \sum_{t = T_i + 1}^{T_{i+1}}  \left \| \hat x^{\I_N}(t) - \frac{1}{|\comset(t)|} \sum_{j = i+1}^{N} \hat x^{\I_j} (t) \right\|^2_2 \\
  &= \sum_{i = 0}^{N-2} \sum_{t = T_i + 1}^{T_{i+1}}  \left \|\frac{1}{N-i}  \sum_{k = i+1}^{N-1} \left( \hat x^{\I_N}(t) -  \hat x^{\I_k} (t) \right) \right\|^2_2 \\
  &= \sum_{i = 0}^{N-2} \left( \frac{1}{N-i} \right)^2   \left \| \sum_{k = i+1}^{N-1} \triangle^{\I_k} (T_i + 1: T_{i+1} ) \right\|^2_2 \\
  &\leq \sum_{i = 0}^{N-2} \left( \frac{1}{N-i} \right)^2 \left(   \sum_{k = i+1}^{N-1}\left \| \triangle^{\I_k} (T_i + 1: T_{i+1} ) \right\|_2  \right)^2 \\
   &= \sum_{i = 0}^{N-2} \left( \frac{1}{N-i} \right)^2 \left(   \sum_{k = i+1}^{N-1} \s_k(i)  \right)^2 
\end{align*}

The sum of $i$ and $j$ counts only up to $N-1$ since $\hat x(t) =  \hat x^{\I_N}(t)$ for $t > T_{N-1}+1$. We also used Cauchy Schwarz inequality in the second to last line. Using the above relation, $\| \hat x^{\I_N}- \hat x\|^2_2$ is upper-bounded by the optimal value of the following problem: 
\begin{equation}\label{eq:optcenterl2removeTleq}\begin{aligned}
	 \max_{ \s_k(i) \geq 0}&\;\;
    	\sum_{i=0}^{N-2}\left( \frac{1}{N-i} \right)^2\left( \sum_{k = i+1}^{N-1} \s_k(i)\right)^2\\
  	\mathrm{s.t.}\;\;\;&\;\;
     	\sum_{i=0}^{k} ( \s_k(i) )^2 \leq \act, k =1,\dots,N-1 ,
\end{aligned}\end{equation}
which is the optimization problem $\op(N-1)$ with $\1 = \act$. 
\end{proof}

}

\begin{lemma}
\label{lem:2norm_2}
The problems $\op (n)$, $\opd (n)$ have identical optimal values, \ie, $\op (n) = \opd (n)$.
\end{lemma}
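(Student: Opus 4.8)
The plan is to establish $\op(n) = \opd(n)$ by verifying that $\opd(n)$ is exactly the Lagrangian dual of $\op(n)$ and then invoking strong duality. First I would rewrite $\op(n)$ in a more tractable form: introduce the substitution $u_k \triangleq \sum_{i=0}^{k}(\s_k(i))^2$ is not quite the right variable, so instead I would work directly with the inner sums $S_i \triangleq \sum_{k=i}^{n}\s_k(i)$. The key structural observation is that the objective $\sum_{i=1}^n \big(\tfrac{1}{n-i-1}\big)^2 S_i^2$ is convex in the $S_i$'s (in fact in the original $\s_k(i)$'s, since each $S_i$ is linear and squaring a nonnegative linear form is convex), while the constraints $\sum_{i=0}^k (\s_k(i))^2 \le \1$ are convex; however $\op(n)$ is a \emph{maximization} of a convex function, so I cannot apply convex-duality theorems directly. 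The standard remedy is that, for fixed ``mass allocation'' along the $i$-axis, the maximum of $\big(\sum_{k=i}^n \s_k(i)\big)^2$ subject to the per-$k$ budget is attained at an extreme point, and one shows the maximizer concentrates each $\s_k(\cdot)$ budget into a single index; after this reduction the problem becomes a concave maximization (or equivalently a linear program in the variables $(\s_k(i))^2$ once the cross terms are handled), for which Lagrangian duality has zero gap.

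Concretely, the steps I would carry out are: (1) For each $k$, argue that at optimality the budget $\sum_{i=0}^k (\s_k(i))^2 = \1$ is tight and, by a rearrangement/pigeonhole argument exploiting that the coefficients $\big(\tfrac{1}{n-i-1}\big)^2$ are increasing in $i$, that it is optimal to put all of estimator $k$'s divergence mass into the largest feasible interval index — this collapses the doubly-indexed problem to choosing, for each $k$, a single index $i(k) \le k$ and setting $\s_k(i(k)) = \sqrt{\1}$. (2) Reformulate the resulting combinatorial/continuous problem and write its Lagrangian with multipliers $\lambda_j \ge 0$ attached to the constraints indexed by $j$; the stationarity conditions reproduce exactly the relation $\sum_{i=1}^j \tfrac{1}{\lambda_i} \le (j+1)^2$ that defines the feasible set of $\opd(n)$, and the dual objective becomes $\sum_{i=1}^n \lambda_i$ after absorbing $\1$ into the scaling. (3) Verify a Slater-type condition (the feasible region of the reformulated primal has nonempty interior, e.g. take all $\s_k(i)$ strictly positive and small) so that strong duality applies with zero gap, giving $\op(n) = \opd(n)$.

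I would present this by first stating and proving the ``concentration'' reduction as an internal claim, then writing out the KKT/Lagrangian computation, and finally citing the strong-duality theorem (for the reduced, genuinely convex program). The main obstacle I anticipate is step (1): the problem as literally written is a convex \emph{maximization}, so the equality $\op(n)=\opd(n)$ cannot be immediate from textbook LP/convex duality, and I must justify carefully that the worst-case allocation is an extreme point of a specific polyhedral structure — in particular handling the coupling between different $k$'s through the shared interval index $i$, and checking the edge behavior of the coefficient $\tfrac{1}{n-i-1}$ near $i = n-1$. Once the reduction to an extreme-point / linear problem is in place, converting to $\opd(n)$ and checking the zero duality gap should be routine.
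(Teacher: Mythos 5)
Your step (1) — the ``concentration'' reduction asserting that at optimality each estimator $k$ puts its entire budget on a single interval index, so that $\s_k(i(k))=\sqrt{\1}$ — is not just unproven, it is false, and the rest of your argument rests on it. Take $n=2$ with $\1=1$: the problem (in the form actually used in the paper, cf.\ \eqref{eq:optcenterl2removeTleq}) is to maximize $\tfrac{1}{9}\bigl(\s_1(0)+\s_2(0)\bigr)^2+\tfrac{1}{4}\s_2(1)^2$ subject to $\s_1(0)^2\le 1$ and $\s_2(0)^2+\s_2(1)^2\le 1$. Every ``concentrated'' allocation gives at most $\max\{\tfrac{4}{9},\,\tfrac{1}{9}+\tfrac{1}{4}\}=\tfrac{4}{9}\approx 0.444$, whereas splitting estimator $2$'s budget as $\s_2(0)=\tfrac{4}{5}$, $\s_2(1)=\tfrac{3}{5}$ attains $0.45$, which matches $\opd(2)=\tfrac14+\tfrac15$. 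So the worst case genuinely spreads mass across intervals (maximizing a convex function over a ball is attained on the whole sphere, not at coordinate vectors), your reduced combinatorial problem has a strictly smaller value than $\op(n)$, and dualizing it cannot yield $\op(n)=\opd(n)$. Moreover, even if some reduction were available, dualizing a nonconvex (or combinatorial) program and invoking Slater does not by itself give a zero gap, which is precisely the difficulty you flagged but did not resolve.

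The paper closes this gap by a different mechanism: it lifts the problem, writing the objective and constraints as $\tr(F_0 vv^T)$ and $\tr(F_i vv^T)\le\1$, relaxes $X=vv^T$ to $X\succeq 0$ to get the SDP $\op'(n)$, and then proves the relaxation is \emph{exact} by a rank-one rounding argument: from an optimal $X^*$ it builds $X=\bigl[\sqrt{x^*_{11}}\ \cdots\ \sqrt{x^*_{nn}}\bigr]^T\bigl[\sqrt{x^*_{11}}\ \cdots\ \sqrt{x^*_{nn}}\bigr]$, which is feasible because each $F_i$ is diagonal, and has no smaller objective because $F_0$ has nonnegative entries and $x^*_{ij}\le\sqrt{x^*_{ii}x^*_{jj}}$. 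Strong duality (Slater) is then applied to the convex SDP pair $\op'(n)$, $\opd'(n)$, and Lemma~\ref{lem:matrix_inequality} converts the LMI $\sum_i\lambda_iF_i\succeq F_0$ into the scalar constraints $\sum_{i=1}^{j}1/\lambda_i\le(j+1)^2$ defining $\opd(n)$. If you want to salvage your write-up, replace the concentration step with this lifting-plus-exactness argument (or an equivalent device for handling the convex maximization); the Lagrangian/Slater portion of your plan is only legitimate once the problem has been recast as a convex program.
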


\noindent We use the following lemma to prove Lemma \ref{lem:2norm_2}. The proof of Lemma \ref{lem:matrix_inequality} is given in the extended version of this paper \cite{sup}.

\begin{lemma}
\label{lem:matrix_inequality}
The following two inequalities are equivalent 
\begin{align}
\label{eq:lem7_cond1}
 & \Lambda= \diag(  \lambda_1, \lambda_2 , \cdots, \lambda_n) \geq \mathbf 1\mathbf 1^T \\
 \label{eq:lem7_cond2}
 &\sum_{i=1}^n \frac{1}{\lambda_i} \leq 1 , \text{ and } \lambda_j > 0,\;\; j =1,\dots,n
\end{align}
\end{lemma}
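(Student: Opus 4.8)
The plan is to prove Lemma~\ref{lem:matrix_inequality} by a standard Schur complement argument, treating the two implications separately. First I would observe that the matrix inequality \eqref{eq:lem7_cond1} forces each $\lambda_i > 0$: since $\Lambda \geq \mathbf 1 \mathbf 1^T \geq 0$ and $\Lambda$ is diagonal, each diagonal entry $\lambda_i$ is nonnegative, and it cannot be zero because then the $i$-th row of $\Lambda - \mathbf 1 \mathbf 1^T$ would have a negative off-diagonal entry with a zero diagonal entry, contradicting positive semidefiniteness. So $\Lambda \succ 0$ and is invertible.

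Given $\Lambda \succ 0$, the key step is to apply the Schur complement to the block matrix
\[
M = \begin{bmatrix} \Lambda & \mathbf 1 \\ \mathbf 1^T & 1 \end{bmatrix}.
\]
Since $\Lambda \succ 0$, we have $M \succeq 0$ if and only if the Schur complement $1 - \mathbf 1^T \Lambda^{-1} \mathbf 1 \geq 0$, i.e. if and only if $\sum_{i=1}^n 1/\lambda_i \leq 1$. On the other hand, taking the Schur complement of $M$ with respect to the bottom-right $1\times 1$ block (which equals $1 > 0$), we get $M \succeq 0$ if and only if $\Lambda - \mathbf 1 \mathbf 1^T \succeq 0$, which is exactly \eqref{eq:lem7_cond1}. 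Chaining these two equivalences through the common condition $M \succeq 0$ gives the claimed equivalence between \eqref{eq:lem7_cond1} and \eqref{eq:lem7_cond2}.

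To write this cleanly I would handle the direction \eqref{eq:lem7_cond2} $\Rightarrow$ \eqref{eq:lem7_cond1} first (here positivity of the $\lambda_i$ is assumed, so $\Lambda \succ 0$ is immediate, and the first Schur complement gives $M \succeq 0$, hence the second gives $\Lambda \succeq \mathbf 1\mathbf 1^T$), and then the direction \eqref{eq:lem7_cond1} $\Rightarrow$ \eqref{eq:lem7_cond2}, where the only extra work is the short argument above that \eqref{eq:lem7_cond1} already implies $\lambda_i > 0$ for all $i$ so that $\Lambda^{-1}$ makes sense. The main obstacle, such as it is, is purely bookkeeping: making sure the degenerate case $\lambda_i = 0$ is correctly excluded before invoking $\Lambda^{-1}$, and stating the Schur complement criterion in the exact form (positive semidefinite, not definite) that the boundary case $\sum 1/\lambda_i = 1$ requires. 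There is no deep difficulty here; it is a two-line linear-algebra fact dressed up for later use in Lemma~\ref{lem:2norm_2}.
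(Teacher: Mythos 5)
Your proposal is correct, but it proves Lemma~\ref{lem:matrix_inequality} by a genuinely different route than the paper. The paper works with the rank-one determinant identity of Lemma~\ref{prop:determinant}: for \eqref{eq:lem7_cond1}$\Rightarrow$\eqref{eq:lem7_cond2} it computes $\det(\Lambda-\mathbf 1\mathbf 1^T)=\bigl(1-\sum_{i=1}^n 1/\lambda_i\bigr)\prod_{i=1}^n\lambda_i\ge 0$, and for the converse it checks nonnegativity of the leading principal minors of $\Lambda-\mathbf 1\mathbf 1^T$ via the same identity. You instead border $\Lambda$ into $M=\left[\begin{smallmatrix}\Lambda & \mathbf 1\\ \mathbf 1^T & 1\end{smallmatrix}\right]$ and chain the two Schur-complement characterizations of $M\geq 0$ (with respect to $\Lambda>0$ on one side and the scalar block $1>0$ on the other), so each implication becomes an equivalence passing through $M\geq 0$. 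Your version buys a cleaner converse direction: the paper's step from ``all leading principal minors nonnegative'' to positive semidefiniteness needs care (that criterion does not characterize semidefiniteness in general; it is rescued here only because every principal submatrix of $\Lambda-\mathbf 1\mathbf 1^T$ has the same diagonal-minus-ones structure), whereas your Schur-complement step requires nothing beyond $\Lambda>0$ and $1>0$, including the boundary case $\sum_i 1/\lambda_i=1$. The paper's route, in exchange, yields the explicit determinant formula that makes the equivalence transparent entry-wise. One small slip in your preliminary step: if some $\lambda_i=0$, the offending entry of $\Lambda-\mathbf 1\mathbf 1^T$ is the \emph{diagonal} entry $\lambda_i-1=-1$, not ``a negative off-diagonal entry with a zero diagonal entry''; indeed \eqref{eq:lem7_cond1} gives $\lambda_i\ge 1$ directly from nonnegativity of the diagonal of a positive semidefinite matrix, which is also how the paper obtains invertibility of $\Lambda$, and with that one-line fix your argument is complete.
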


\nver{ 


  
}

\ver{

We will use the following lemma to prove Lemma \ref{lem:matrix_inequality}. 
\begin{lemma}[Lemma 1.1 of \cite{Ding2007}]
\label{prop:determinant}
If $ \Lambda \in \mathbb R^{n \times n}$ is an invertible matrix, and $u, v \in \mathbb R^{n}$ are two column vectors, then 
\[ \det (\Lambda + u v^T) = ( 1 + v^T \Lambda^{-1} u ) \det (\Lambda) .\]
\end{lemma}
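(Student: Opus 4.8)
The plan is to prove the matrix determinant lemma in two steps: first reduce the rank-one update of $\Lambda$ to a rank-one update of the identity, and then evaluate the latter.

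\textbf{Step 1 (reduction).} Since $\Lambda$ is invertible, I would write $\Lambda + u v^T = \Lambda\bigl( I + \Lambda^{-1} u\, v^T \bigr)$ and use multiplicativity of the determinant to get $\det(\Lambda + u v^T) = \det(\Lambda)\,\det\bigl( I + (\Lambda^{-1} u) v^T \bigr)$. Setting $x = \Lambda^{-1} u$ and $y = v$, it then remains to show the scalar identity $\det( I + x y^T) = 1 + y^T x$.

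\textbf{Step 2 (the identity case).} I would establish $\det(I + x y^T) = 1 + y^T x$ by a block-matrix computation. Consider $M = \begin{bmatrix} I_n & x \\ -y^T & 1 \end{bmatrix}$. Evaluating $\det M$ via the Schur complement of the $(1,1)$ block gives $\det(I_n)\cdot\bigl(1 + y^T I_n^{-1} x\bigr) = 1 + y^T x$, while evaluating it via the Schur complement of the $(2,2)$ block gives $1\cdot\det\bigl(I_n + x\cdot 1^{-1}\cdot y^T\bigr) = \det(I_n + x y^T)$; equating the two expressions yields the claim. (Equivalently, one may argue via eigenvalues: the rank-at-most-one matrix $x y^T$ has its only possibly-nonzero eigenvalue equal to $y^T x$, on the eigenvector $x$, so $I_n + x y^T$ has eigenvalues $1 + y^T x$ and $1$ with multiplicity $n-1$, whose product is $1 + y^T x$.) Combining Steps~1 and~2 gives $\det(\Lambda + u v^T) = (1 + v^T \Lambda^{-1} u)\det(\Lambda)$.

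There is essentially no genuine obstacle here; the statement is the standard matrix determinant lemma. The only points worth stating carefully are that the factoring in Step~1 uses invertibility of $\Lambda$ (which is hypothesized), that the Schur-complement formulas in Step~2 require the pivot blocks to be invertible (satisfied since $I_n$ and the scalar $1$ are both invertible), and that the degenerate case $x = 0$ — equivalently $u = 0$ — is immediate from either route since both sides reduce to $\det(\Lambda)$.
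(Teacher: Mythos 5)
Your proof is correct: both the factorization $\Lambda + uv^T = \Lambda(I + \Lambda^{-1}u\,v^T)$ and the two Schur-complement evaluations of the bordered matrix (or the eigenvalue argument) are valid, and the invertibility hypotheses you flag are exactly the ones needed. Note that the paper itself gives no proof of this statement — it is quoted verbatim as Lemma 1.1 of the cited reference \cite{Ding2007} and used as a black box in the proof of Lemma \ref{lem:matrix_inequality} — so there is no in-paper argument to compare against; your write-up is the standard proof of the matrix determinant lemma and would serve as a self-contained replacement for the citation.
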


\begin{proof}[Proof (Lemma \ref{lem:matrix_inequality})]
  We first show that \eqref{eq:lem7_cond1} implies \eqref{eq:lem7_cond2}. 
  From \eqref{eq:lem7_cond1}, $\lambda_i \geq 1$, so $\Lambda$ is invertible. 
  From Lemma~\ref{prop:determinant}, we have
  \begin{displaymath}
    \left( 1-\sum_{i=1}^n \frac{1}{\lambda_i} \right)\prod_{i=1}^n \lambda_i =  (1-\mathbf 1^T \Lambda^{-1}\mathbf 1) \det(\Lambda) 
    = \det(\Lambda - \mathbf 1\mathbf 1^T)
  \end{displaymath}
  Since $\Lambda \geq \mathbf 1\mathbf 1^T$, the determinant of $\Lambda - \mathbf 1\mathbf 1^T$ is non-negative, which proves \eqref{eq:lem7_cond2}. 
We then show that \eqref{eq:lem7_cond2} implies \eqref{eq:lem7_cond1}. Denote the $k$-th leading principal minor of $\Lambda -\mathbf 1\mathbf 1^T$ as $\Delta_k$. By Lemma~\ref{prop:determinant}, we obtain that
  \begin{displaymath}
   \Delta_k =  \left( 1-\sum_{i=1}^k \frac{1}{\lambda_i} \right)\prod_{i=1}^k \lambda_i \geq 0,
  \end{displaymath}
  which proves \eqref{eq:lem7_cond1}.
\end{proof}
}

\begin{proof}[Proof (Lemma \ref{lem:2norm_2})]
Let $v \in \mathbb R^{n(n-1)/2}$ be a vector that is composed of $\s_{i+1:n}(j) = \{\s_{i+1}(j),\s_{i+2}(j) , \cdots,\s_{n}(j) \} $ for all $j = 0, 1, \cdots, p$, \ie, 
  \begin{displaymath}
    v \triangleq \begin{bmatrix}
     \s_{1:n}(0) ,\s_{2:n}(1), \cdots,\s_{n}(n) \end{bmatrix} .
  \end{displaymath}
Let the following matrices be defined as
\begin{align}
\nonumber& X = vv^T \geq 0 \\
  \label{eq:def_X_F}
&F_0 = \text{diag}\left(\frac{1}{2^2},\cdots,\frac{\mathbf 1_{n-1}\mathbf 1^T_{n-1}}{(n)^2},\frac{\mathbf 1_{n}\mathbf 1^T_{n}}{(n+1)^2}\right) \\
\nonumber&F_i = \text{diag}\left(  e_{n,i } ,e_{n-1,i } ,\cdots, e_{n-i+1,i } , 0_{n-i} , \cdots , 0_{1} \right)
\end{align}
where $1_{k}$ is a $k$-dimensional vector with all elements being $1$; $e_{k,j}$ is a $k$-dimensional row vector with $j$-th entry being $1$ and other entries being $0$; and $0_k$ is a $k$-dimensional row vector with all elements being $0$. Using SDP relaxation \cite{low2013convex}, the problem $\op(n)$ can be converted into
\begin{alignat*}{3}
	 \op'(n) =  & \max_{X \geq 0}   \;\;&&\tr(F_0X)&\\
    	 		&\;\mathrm{s.t.}   \;\;&& \tr(F_iX) \leq \1,\,\forall i = 1,\dots,n &
\end{alignat*}
Hence $\op(n) \leq \op'(n)$.
This relaxation can be observed from the following relations: 
\begin{align*}
& \sum_{i=0}^{n-1}\left( \frac{1}{n-i+1} \right)^2\left( \sum_{k = i+1}^{n} \s_k(i)\right)^2 = v^T F_0 v= \tr(F_0v v^T) \\
& \sum_{i=0}^{k} ( \s_k(i) )^2 = v^T F_i v = \tr(F_i v v^T) .
\end{align*}
 We next show that the relaxation of the problem $\op(n)$ to the semidefinite problem $\op'(n)$ is also exact. Assume that $\op (n)$ is feasible and bounded. Let $X^*= \{ x^*_{ij}\}$ be the optimal solution of $\op'$. Define $X$ as 
  \begin{displaymath}
    X = \begin{bmatrix}
      \sqrt{x_{11}^*}&\dots&\sqrt{x_{nn}^*}
    \end{bmatrix}^T \begin{bmatrix}
      \sqrt{x_{11}^*}&\dots&\sqrt{x_{nn}^*}
    \end{bmatrix} .
  \end{displaymath}
 From $x_{ii} = x^*_{ii}$ and \eqref{eq:def_X_F}, $X$ satisfies the contraints
\begin{align}
\label{eq:rem2_1}
\tr(F_iX) = \tr(F_iX^*) = \1.
\end{align} 
Furthermore, because $X^*$ is the optimal solution and $x_{ij} = \sqrt{x_{ii}^*x_{jj}^*}\geq x_{ij}^*$ (due to $X^* \geq 0$),
\begin{align}
\label{eq:rem2_2}
\tr(F_0X^*) \geq \tr(F_0X) \geq \tr(F_0X^*) . 
\end{align} 
Therefore, \eqref{eq:rem2_1} and \eqref{eq:rem2_2} shows that $\op (n) = \op'(n)$.

Next, we consider the following dual problem of $\op'(n)$: 
  \begin{align}
    \label{eq:sdpdual}
  \opd' (n) = \min_{\lambda} \; & \;   \1 \sum_{i=1}^{n} \lambda_i \\
    \mathrm{s.t.} \;\; &\sum_{i=1}^{n} \lambda_i F_i\geq F_0. \nonumber
  \end{align}
  Because there exists an strictly positive definite matrix $X > 0$ such that $\tr(F_i X) =\1$ for all $i = 1, 2, \cdots, n$, from Slater's condition~\cite{Boyd2004}, strong duality holds between $\op'(n)$ and $\opd'(n)$. Let $ \Lambda_{1:i} = \diag(\lambda_1,\dots,\lambda_i)\in \mathbb R^{i\times i}$, and observe that 
$
    \sum_{i=1}^{n} \lambda_i F_i = \diag(\Lambda_{1:1},\Lambda_{1:2},\dots,\Lambda_{1:n}). 
$
  Hence, the constraint $\sum_{i=1}^{n}\lambda_i F_i \geq F_0$ is equivalent to
  \begin{align*}
    & \diag(\lambda_1,\dots,\lambda_j) \geq \frac{1}{(j+1)^2} \mathbf 1\mathbf 1^T ,\,\forall j=1,\dots,n,  \\
    & \Longleftrightarrow \sum_{i=1}^j\frac{1}{\lambda_i}\leq (j+1)^2,\;\; \lambda_j > 0,\;\; j =1,\dots,n, 
\end{align*}
where the second line is due to Lemma~\ref{lem:matrix_inequality}. Therefore, the dual problem $\opd'(n)$ can be reformulated into $\opd(n)$. 
\end{proof}

 \begin{lemma}
  \label{lem:2norm_3}
   The solution of the problem $\opd (n)$ satisfies
  \begin{equation}
    \opd(n)  = \1 \left\{ \frac{1}{4} + \sum_{i = 2}^n \frac{1}{2 i + 1} \right \} \leq  \frac{1}{2} \1 \log(n+1) .
    \label{eq:oddharmonicseries}
  \end{equation} 
\end{lemma}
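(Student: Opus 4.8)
The claim is really two statements glued together: an exact evaluation of the optimal value of $\opd(n)$ and the clean upper bound $\frac12\1\log(n+1)$. I would prove them in that order. Since the objective of $\opd(n)$ is linear in $\lambda$ (the constant $\1$ merely scales it, so it factors through every step), while each constraint carves out a sublevel set $\{\lambda>0:\sum_{i\le j}1/\lambda_i\le (j+1)^2\}$ of a convex function on the positive orthant, $\opd(n)$ is a convex program and its KKT conditions are both necessary and sufficient for optimality. To guess the minimizer I would force every constraint to hold with equality: $\sum_{i=1}^j 1/\lambda_i=(j+1)^2$ gives $1/\lambda_1=4$ and, by differencing consecutive constraints, $1/\lambda_i=(i+1)^2-i^2=2i+1$ for $i\ge 2$; that is, $\lambda_1^\star=\frac14$ and $\lambda_i^\star=\frac1{2i+1}$. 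Feasibility of this point is exactly the telescoping identity $4+\sum_{i=2}^j(2i+1)=(j+1)^2$, so $\opd(n)\le \1\bigl(\frac14+\sum_{i=2}^n\frac1{2i+1}\bigr)$.

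\textbf{Matching lower bound.} For optimality I would certify $\lambda^\star$ by exhibiting Lagrange multipliers. Stationarity of the Lagrangian at $\lambda^\star$ reads $1=\sum_{j\ge i}\mu_j/(\lambda_i^\star)^2$, which I would solve by back-substitution: $\mu_n=(\lambda_n^\star)^2$ and $\mu_j=(\lambda_j^\star)^2-(\lambda_{j+1}^\star)^2$ for $j<n$. Because $\lambda_1^\star>\lambda_2^\star>\cdots$ all the $\mu_j$ are strictly positive (dual feasibility), and because every constraint is active at $\lambda^\star$ complementary slackness holds; convexity then forces $\lambda^\star$ to be the global minimizer, giving the first equality in \eqref{eq:oddharmonicseries}. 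A shortcut is available: strong duality of the semidefinite pair set up in the proof of Lemma~\ref{lem:2norm_2} delivers the same value with less bookkeeping.

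\textbf{The logarithmic bound.} It remains to show $\frac14+\sum_{i=2}^n\frac1{2i+1}\le\frac12\log(n+1)$. The engine is the elementary inequality $\log(1+x)\ge\frac{2x}{2+x}$ for $x\ge 0$, which holds since the difference vanishes at $x=0$ and has derivative $x^2/\bigl((1+x)(2+x)^2\bigr)\ge 0$. Taking $x=1/i$ gives $\frac1{2i+1}\le\frac12\log\frac{i+1}{i}$, so $\sum_{i=2}^n\frac1{2i+1}\le\frac12\sum_{i=2}^n\bigl(\log(i+1)-\log i\bigr)=\frac12\log\frac{n+1}{2}$ by telescoping. Hence $\frac14+\sum_{i=2}^n\frac1{2i+1}\le\frac14-\frac12\log 2+\frac12\log(n+1)\le\frac12\log(n+1)$, the last step being nothing more than $\frac14\le\frac12\log 2$, i.e. $e^{1/2}\le 2$.

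\textbf{The main obstacle.} The feasibility check and the logarithmic estimate are routine; the only delicate step is certifying that the all-active candidate is actually optimal for $\opd(n)$ — producing multipliers $\mu_j$ that are simultaneously stationary and nonnegative, or, equivalently, threading the equalities $\op(n)=\op'(n)=\opd'(n)=\opd(n)$ together with Slater's condition from the proof of Lemma~\ref{lem:2norm_2}. Once the minimizer $\lambda^\star$ is in hand, everything downstream — including the comfortable slack in the $\frac12\log(n+1)$ bound, which reduces to the single numerical fact $e^{1/2}\le 2$ — is straightforward.
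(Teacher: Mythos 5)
Your proposal is correct and follows essentially the same route as the paper: both certify the all-constraints-active point via KKT conditions for the convex program (the paper does this after a change of variables to the cumulative sums $s_k=\sum_{i\le k}\1/\lambda_i$, you work directly in the $\lambda_i$), and both bound the resulting odd-harmonic series through the same per-term inequality $\tfrac{1}{2i+1}\le\tfrac12\log\tfrac{i+1}{i}$ followed by telescoping and the numerical fact $\tfrac14\le\tfrac12\log 2$. Your derivation of that inequality from $\log(1+x)\ge\tfrac{2x}{2+x}$ (the paper uses Jensen's inequality plus integration) and your uniform treatment of $n=1$ (the paper handles it separately) are only cosmetic differences.
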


\nver{
The proof of Lemma \ref{lem:2norm_3} is given in the extended version of this paper \cite{sup}. Now we are ready to prove Lemma \ref{lemma:part2_1}. 
%

}

\ver{
\begin{proof}[Proof (Lemma \ref{lem:2norm_3})]
We first consider the case when $n \geq 2$. Let us define an auxiliary variable
\begin{align}
  & s_k = \sum_{i=1}^k \frac{\1}{\lambda_i} ,  & k = 1, \cdots, n,
\end{align}
where $s_0 = 0$, and $s_{k} = 0$ for $k \geq n+1$. Using the auxiliary variable $s_k$, we rewrite the optimization problem $\opd(n)$ as 
  \begin{equation}
   \label{eq:s_inv_problem}
  \begin{aligned}
    \mathrm{min}\;\;& \sum_{i=1}^n\frac{1}{s_i-s_{i-1}} \\
   \mathrm{s.t. } \;\;& s_i\leq (i+1)^2,\;\;\; s_{i-1} \leq s_{i}, \;\;\; i=1,\dots,n
  \end{aligned}
  \end{equation}
We define the Lagrangian $L: \mathbb R^n \times \mathbb R^n \times \mathbb R^n \rightarrow \mathbb R$ of the problem \eqref{eq:s_inv_problem} as follows: 
   \begin{displaymath}
    L (s, \mu, \eta) = \sum_{i=1}^n \frac{1}{s_i-s_{i-1}} + \mu_i (s_i-(i+1)^2) + \eta_i(s_{i-1}-s_{i}).
 \end{displaymath}
    Let $(s^*, \mu^*, \eta^*)$ be any optimal primal and dual variables, then $(s^*, \mu^*, \eta^*)$ satisfies the Karush-Kuhn-Tucker (KKT) conditions. Solving the KKT conditions, we obtain that 
  \begin{align*}
&  s_i^* = (i+1)^2, \;\;\;\;\;\;\;\; \;\;\;\;\;\;\;\;\;\;\;\;\;\;\;\;  i=1,\dots,n \\
& \eta_i^* = 0, \;\;\;\;\;\;\;\; \;\;\;\;\;\;\;\;\;\;\;\;\;\;\;\;\;\;\;\;\;\;\;\;\;\;    i=1,\dots,n\\
 &   \mu_i^* = \begin{cases}
      \frac{1}{16}-\frac{1}{25}&\text{ if }i=1\\
      \frac{1}{(2i+1)^2} -\frac{1}{(2i+3)^2} &\text{ if }i=2,\dots,n-1\\
      \frac{1}{(2n+1)^2}&\text{ if }i=n
    \end{cases}
\end{align*}
To see this, observe that 
 \begin{enumerate}
  \item $s_i^* -  (i+1)^2\leq 0$, $\mu_i^*(s_i^* - (i+1)^2) = 0$, $i=1,\dots,n$.
  \item $s_{i-1}^* -  s_{i}^*\leq 0$, $\eta_i^*(s_{i-1}^* - s_i^*) = 0$, $i=1,\dots,n$.
  \item $\mu_i^*  \geq 0$, $i=1,\dots,n$
  \item $\eta_i^* \geq 0$, $i=1,\dots,n$.
  \item     
   $
      d L(s^*,\mu^*,\eta^*) /ds_i  = 0
    $
\end{enumerate}
Since the problem \eqref{eq:s_inv_problem}, which is equivalent with $\opd(n)$, is a convex problem, the KKT conditions are sufficient for optimality~\cite{Boyd2004}. Therefore, the optimal primal and dual variables are $(s^*, \mu^*, \eta^*)$, which result in the optimal value 
\begin{align}
\label{eq:Pn_formula}
\opd(n)  = \frac{1}{4} + \sum_{i=2}^n  \frac{1}{2i+1} .
\end{align}
Next, we upper-bound the optimal cost $\opd'(n)$. Since the function $f ( i )  = (2 i + 1)^{-1}$, $ i \geq 1$, is convex, from Jensen's inequality, $f ( i )$ is upper-bounded by
$f ( i ) \leq \frac{1}{2}\left( f( i - t/2 ) +f( i + t/2 )\right),$ for any $t \in [ 0, 1]$. Integrating this along $t \in [ 0 ,1]$, we obtain that 
\begin{align*}
 f ( i ) &\leq \frac{1}{2} \int_{t=0}^1 f \left( i - \frac{t}{2} \right) +f\left( i + \frac{t}{2} \right)dt \\
 &= \frac{1}{2} ( \log (i+1)-\log (i) ).
\end{align*}
Combining above bound with \eqref{eq:Pn_formula}, we establish a lower-bound of the optimal value
 \begin{align*}
   \opd(n) &=  \1 \left\{ \frac{1}{4} +\sum_{i=1}^n\frac{1}{2i+1} -  \frac{1}{3} \right\}
   \leq  \frac{1}{2} \1 \log(n+1) 
 \end{align*}
 On the other hand, when $n = 1$, the optimal variable is $\lambda^*_1 = \1/4$, which attains the optimal value $ \opd(1) = \1/4 \leq \log(  2 ) / 2 \approx \1 0.346574$. 
\end{proof}

}

\BPF[Proof (Lemma \ref{lemma:part2_1})]
Applying Lemma~\ref{lemma:difference} and Lemma \ref{lem:2norm_1}, Lemma \ref{lem:2norm_2}, and then Lemma \ref{lem:2norm_3} consecutively, we obtain 
\begin{align*}
\|\hat x^{\I} - \hat x \|^2_2 \leq \op = \opd 
 \leq \frac{1}{2} \log( N ) \act. 
\end{align*}
\EPF

\ver{

\subsection{Proof of Lemma \ref{lemma:part2_2}}
\label{sec:proof3}
Lemma \ref{lemma:part2_2} is a trivial extension of the following Proposition. We omit its prove due to space constraints. 
\begin{proposition}
  \label{lemma:mediandivergence}
Let $z_1,\dots,z_l$ be real numbers. Define
\begin{align*}
  z = \frac{1}{2}\left(\max_i z_i + \min_i z_i\right).
\end{align*}
Then for any $i$, we have
\begin{align}
  |z - z_i| \leq \frac{1}{2} \max_j |z_j-z_i|. 
  \label{eq:mediandivergence}
\end{align}
\end{proposition}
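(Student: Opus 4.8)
The plan is to reduce the statement to elementary one-dimensional inequalities; nothing from the estimator or the system dynamics is needed. I would set $M \triangleq \max_j z_j$ and $m \triangleq \min_j z_j$, so that by definition $z = \tfrac12(M+m)$, and then fix an arbitrary index $i$. Since $m \le z_i \le M$, the quantity $M - z_i$ is nonnegative and the quantity $m - z_i$ is nonpositive.

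The next step is to record the elementary fact that for any $a \ge 0$ and $b \le 0$ one has $|a+b| \le \max\{|a|,|b|\}$: if $a+b \ge 0$ then $|a+b| = a+b \le a = |a|$, while if $a+b < 0$ then $|a+b| = -(a+b) = (-a) + (-b) \le -b = |b|$. Applying this with $a = M - z_i$ and $b = m - z_i$ gives
\begin{align}
|z - z_i| \;=\; \tfrac12 \bigl| (M - z_i) + (m - z_i) \bigr| \;\le\; \tfrac12 \max\{\, M - z_i, \; z_i - m \,\}. \nonumber
\end{align}

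To finish, I would choose indices $i^\star, j^\star$ with $z_{i^\star} = M$ and $z_{j^\star} = m$. Then $M - z_i = z_{i^\star} - z_i \le |z_{i^\star} - z_i| \le \max_j |z_j - z_i|$, and similarly $z_i - m = z_i - z_{j^\star} \le |z_i - z_{j^\star}| \le \max_j |z_j - z_i|$. Substituting both bounds into the previous display yields $|z - z_i| \le \tfrac12 \max_j |z_j - z_i|$, which is \eqref{eq:mediandivergence}.

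I do not expect a genuine obstacle here: the statement is a short lemma and the argument above is essentially forced. The only point requiring a little care is the sign bookkeeping in the middle step — one must exploit that $M - z_i$ and $m - z_i$ have opposite signs in order to replace their sum by the larger of their two magnitudes rather than by their sum. Once Proposition~\ref{lemma:mediandivergence} is in hand, Lemma~\ref{lemma:part2_2} follows by applying it coordinatewise with $z_j = \hat x^{\J_j}_i(t)$ ranging over the valid estimators $\J_j \in \comset(t)$, taking $\I$ as the distinguished index, bounding $|\hat x^{\I}_i(t) - \hat x^{\J_j}_i(t)|$ by $\part^{\I,\J_j}_{p,\infty}$ via \eqref{eq:local_divergence_cond}, and then taking the maximum over coordinates $i$ and the supremum over $t$.
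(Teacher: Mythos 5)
Your proof is correct: the paper states Proposition~\ref{lemma:mediandivergence} without proof (it is treated as elementary, with the proof omitted for space), and your argument --- writing $z - z_i = \tfrac12\bigl((M - z_i) + (m - z_i)\bigr)$, noting the two terms have opposite signs so their sum is bounded in magnitude by the larger of $M - z_i$ and $z_i - m$, each of which is at most $\max_j |z_j - z_i|$ since $M$ and $m$ are themselves among the $z_j$ --- is exactly the standard argument the paper implicitly relies on. Your closing remark on how Lemma~\ref{lemma:part2_2} follows coordinatewise also matches the paper's claim that the lemma is a trivial extension of the proposition.
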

}
\vspace{-5mm}

\section{Numerical examples}

In this section, we study the proposed estimator numerically and compare it with existing algorithms from Shoukry et al.~\cite{shoukry2017secure}, Chong et al.~\cite{chong2015observability}, Pajic et al.~\cite{pajic2014robustness}, and Lu et al.~\cite{lu2017secure}. We tested the IEEE 14-Bus system, the Unmanned Ground Vehicle (UGV), and the temperature monitor as follows.
\begin{itemize}
\item[(i)] \textit{IEEE 14-Bus system~\cite{zimmerman2011matpower,liu2009,shoukry2017secure}:} The IEEE 14-Bus system is modeled as the system \eqref{eq:system} with $A$ and $C$ given in \cite{zimmerman2011matpower}. We additionally add process noise and sensor noise by setting $B = \begin{bmatrix} I_{10} & O_{10,35} \end{bmatrix}$ and $D = \begin{bmatrix} O_{10,35} & I_{35} \end{bmatrix}$. 
\item[(ii)] \textit{Unmanned ground vehicle (UGV)~\cite{pajic2014robustness,shoukry2017secure}:} A UGV moving in a straight line has the dynamics 
 \begin{align*}
 \begin{bmatrix} \dot p \\ \dot v  \end{bmatrix} =    \begin{bmatrix} 0 & 1 \\ 0 & -b/m \end{bmatrix} \begin{bmatrix} \dot x \\ \dot v  \end{bmatrix} + \begin{bmatrix} 0 \\ 1/m  \end{bmatrix} u +  \begin{bmatrix} I_2 & O_{2,3} \end{bmatrix} w ,
\end{align*}
where $p$ is the position, $v$ is the velocity, $u$ is the force input, $w$ is the disturbance, $m$ is the mechanical mass, $b$ is the translational friction coefficient, $I_n$ is a $n$-dimensional identity matrix, and $O_{n,m}$ is a $m \times n$ zero matrix. We assume that the estimator can access the values of $u$. The UGV is equipped with a sensor measuring $x$ and two sensors measuring $v$, \ie, 
 \begin{align*}
y =    \begin{bmatrix} 1 & 0 \\ 0 & 1 \\ 0 & 1 \end{bmatrix} \begin{bmatrix}p \\ v  \end{bmatrix} + \begin{bmatrix} O_{3,2} & I_2 \end{bmatrix} w .
\end{align*}
The system parameters are the same as~\cite{shoukry2017secure}: $m = 0.8kg$, $b = 1$, and sampling interval $T_s = 0.1s$. 
\item[(iii)] \textit{Temperature monitor~\cite{mo2011sensor}:} The heat process in a plannar closed region $(z_1, z_2 ) \in [0, l ] \times  [0, l ]$ can be expressed by 
\begin{align}
\label{eq:heat}
\frac{ \partial x } {\partial t } = \alpha \left( \frac{\partial^2 x } {\partial z^2_1 }  + \frac{\partial^2 x } {\partial z^2_2 } \right)  , 
\end{align}
where $\alpha$ is the speed of the diffusion process; and $x(z_1,z_2)$ is the temperature at position $(z_1, z_2 )$ subject to the boundary conditions 
\begin{align*}
\frac{ \partial x } {\partial z_1 } \bigg|_{t , 0, z_2 }= \frac{ \partial x } {\partial z_1 } \bigg|_{t , l, z_2 }= \frac{ \partial x } {\partial z_1 } \bigg|_{t , z_1, 0 }= \frac{ \partial x } {\partial z_1 }\bigg|_{t , z_1, l } = 0 .
\end{align*}
We discretize the region using a $N \times N$ grid and the continuous-time with sampling interval $T_s$ to model \eqref{eq:heat} into \eqref{eq:system}. We additionally add process noise and sensor noise by setting $w $, $B = \begin{bmatrix} I_{9} & O_{9,20} \end{bmatrix}$ and $D = \begin{bmatrix} O_{9,20} & I_{20} \end{bmatrix}$. We set $\alpha = 0.1m^2/s$, $l = 4m$, and $N = 5$ as in~\cite{mo2011sensor}.
\end{itemize}

The noise $w$ is generated from a uniform distribution between $[-1,1]$. The time horizon is set to be $T = 100$. The number of compromised sensors is set to be $1$; the compromised sensor is randomly chosen among non-critical sensors (\ie, sensors that can be removed without losing observability). The attack signal is drawn from a Gaussian distribution with mean zero and variance $10^4$ and $1$. For the proposed algorithm, we used \eqref{eq:localdetection} with $(p,q) = (2,2), (2, \infty)$, and $(\infty, \infty)$ simultaneously. We ran each example for $100$ times and recorded their average estimation errors $\| e \|_2$ and computation times. 
The code is written in Matlab (Windows) and runs on an Intel Core i5-4690 Processor (4x3.50GHz/6MB L3 Cache). We summarize the estimation errors and computation times in Table \ref{tab:errorA100} and Table \ref{tab:compA100}. Some entries are left as NA (not applicable) because the system (ii) does not satisfy the linear matrix inequality (LMI) assumption required by the algorithm proposed by \cite{lu2017secure}. The algorithms tested have different relative accuracies and computation times from example to example. Among all examples tested, the proposed algorithm has relatively low estimation errors and average computation times.\vspace{-3mm}

\begin{table}
\begin{center}
\begin{tabular}{ l | c c c c c c}
& Proposed & \cite{shoukry2017secure} & \cite{chong2015observability} & \cite{pajic2017design} &\cite{lu2017secure} \\
\hline
 (i)  & 11.1573 &  17.2948 &  13.7927  & 48.9880 & 17.5957 \\
 (ii) & 6.9108  &  4.7246 &   7.2937 &  22.0884 &  NA  \\
  (iii) & 6.7833    &7.9424 &   6.8902   & 8.5448 & 18.5803 \\
\end{tabular}
\caption{Estimation errors in two-norm $\| e(1:100) \|_2$ when the attack variance is $10^4$. }\label{tab:errorA100}

\begin{tabular}{ l | c c c c c c}
& Proposed & \cite{shoukry2017secure} &  \cite{chong2015observability} &  \cite{pajic2017design}  & \cite{lu2017secure} \\
\hline
 (i) & 0.0062  &  0.0108  &  0.0045  &  0.0006 &  0.0345  \\
    (ii)&  0.0001 &   0.0026   & 0.0002  &  0.0003 & NA \\ 
  (iii) & 0.0023   & 0.0096  &  0.0020   & 0.0005 &  0.0048 \\
\end{tabular}
\caption{Average computation time in second for time horizon $100$ when the attack variance is $10^4$.}\label{tab:compA100}
\end{center}
\vspace{-1cm}
\end{table}

\ver{

\begin{table}
\vspace{10mm}
\begin{center}
\begin{tabular}{ l | c c c c c c}
& Proposed & \cite{shoukry2017secure} &  \cite{chong2015observability} &  \cite{pajic2014robustness}  & \cite{lu2017secure} \\
\hline
 (i)  & 6.4653  &  9.6477  &  7.4754  &  9.8822 &  17.3752 \\
 (ii) & 6.8500   & 4.6481   & 7.2884  & 23.3668 &  NA \\
  (iii) &6.7060  &  8.0070   & 6.8144  &  8.6655 & NA\\
\end{tabular}
\caption{Estimation errors in two-norm, \ie, $\| e(1:100) \|_2$ when the attack variance is $1$. }\label{tab:errorA1}
\vspace{3mm}
\begin{tabular}{ l | c c c c c c}
& Proposed & \cite{shoukry2017secure} &  \cite{chong2015observability} &  \cite{pajic2014robustness}  & \cite{lu2017secure} \\
\hline
 (i) & 0.0062  &  0.0109  &  0.0046  &  0.0005 &  0.0325 \\
    (ii)&  0.0001 &   0.0024  &  0.0002  &  0.0003 &  NA\\ 
  (iii) & 0.0023   & 0.0093   & 0.0020  &  0.0004 &   NA \\
\end{tabular}
\caption{Average computation time in second for time horizon $100$ when the attack variance is $1$. }\label{tab:compT1}
\end{center}
\vspace{-1cm}
\end{table}
}

\section{Conclusion}
 
In this paper, we propose a real-time state estimator for noisy systems that is resilient to sparse sensor integrity attacks. The proposed estimator is stable under relaxed assumptions. Its worst-case estimation errors are $O ( \log { {m}\choose{\rho} })$ in the $\mathcal H_2$ system, $O( 1 )$ in the $\mathcal H_\infty$ system, and $O( 1 )$ in the $\ell_1$ system. Its computational complexity is $O ( \log { {m}\choose{\rho} })$. 


\appendices
\ver{
\section{Proof of Theorem \ref{thm:nonexistence}}

An intermediate step in the proof of Theorem \ref{thm:nonexistence} is to consider the following condition (denote as Condition B). 
\begin{itemize}
\item [B.] There exist two states $(x,x')$, two disturbances $(w,w')$, and two attacks $(a,a')$ such that all of the followings are satisfied: 
\begin{itemize}
\item[a)] both $(x, w, a)$ and $(x', w', a' )$ satisfies the dynamics \eqref{eq:system} and assumptions $\| a \|_0 \leq \gamma$, and $\| w \|_p \leq 1$
\item[b)] $y(t) = y(t)'$ at all time $t \in \mathbb N$
\item[c)] the difference between the two states is unbounded, \ie, $\| x - x'\|_q = \infty$. 
\end{itemize}
\end{itemize}

\begin{lemma}
\label{lem:limit_1}
If $(A, C_\K)$ is not detectable for some set $\K \subset \mathcal S $ with $|\K| = m - 2 \gamma$, then Condition B holds. 
\end{lemma}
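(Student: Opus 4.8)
The plan is to build the pair of trajectories required by Condition B explicitly from a hidden unstable mode of $(A,C_{\mathcal K})$. First I would reduce the task to constructing a single \emph{difference trajectory}: a pair $(\xi,\nu)$ with $\xi(t+1)=A\xi(t)+B\nu(t)$, $\xi(0)=0$, $C_{\mathcal K}\xi(t)+D_{\mathcal K}\nu(t)=0$ for all $t$, $\|\nu\|_p\le 2$, and $\|\xi\|_q=\infty$. Given such a pair, put $x=\xi/2$, $x'=-\xi/2$, $w=\nu/2$, $w'=-\nu/2$: both $(x,w)$ and $(x',w')$ solve the state equation from the zero initial state with $\|w\|_p,\|w'\|_p\le 1$, and the output mismatch $d(t):=C\xi(t)+D\nu(t)$ vanishes on $\mathcal K$ by construction, hence is supported on the $2\rho$ sensors of $\mathcal S\setminus\mathcal K$. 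Splitting $\mathcal S\setminus\mathcal K$ into two disjoint sets $\mathcal C,\mathcal C'$ of size $\rho$, defining $a$ by placing $-d$ on $\mathcal C$ (zero elsewhere) and $a'$ by placing $d$ on $\mathcal C'$ (zero elsewhere), we obtain $a'-a=d$, $\|a\|_0,\|a'\|_0\le\rho$, and therefore $y=y'$ while $\|x-x'\|_q=\|\xi\|_q=\infty$; this is exactly Condition B.

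Next I would produce $(\xi,\nu)$. Since $(A,C_{\mathcal K})$ is not detectable, the PBH test (equivalently, the Kalman observability decomposition of $(A,C_{\mathcal K})$) gives an $A$-invariant subspace $\mathcal W$ — the unobservable subspace — with $C_{\mathcal K}\mathcal W=\{0\}$ and such that $A|_{\mathcal W}$ has an eigenvalue $\lambda$ with $|\lambda|\ge 1$; let $v\in\mathcal W$ be a corresponding (generalized) eigenvector. The idea is to keep $\xi$ inside $\mathcal W$ and let it grow along $v$: because $B$ has full row rank, for any scalar sequence $\{c_t\}$ with $c_0=0$ there is an input $\nu^{\mathrm{dr}}$ with $B\nu^{\mathrm{dr}}(t)=(c_{t+1}-\lambda c_t)v$, giving $\xi(t)=c_t v\in\mathcal W$ and hence $C_{\mathcal K}\xi(t)=0$. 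Choosing $\{c_t\}$ according to the case — an impulsive (single-time-step) input when $|\lambda|>1$, and a slowly summable input such as $c_{t+1}-c_t\sim 1/t$, modulated by $\lambda^t$ when $\lambda$ lies on the unit circle but is not $1$, when $|\lambda|=1$ — makes $\|\xi\|_q=\infty$ for each of $(p,q)=(2,2),(2,\infty),(\infty,\infty)$ while $\|\nu^{\mathrm{dr}}\|_p$ stays as small as we wish (for $q=2$ with $\lambda=1$ one may even take $c_t$ eventually constant, so $\nu^{\mathrm{dr}}$ is eventually zero).

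Finally I would handle the direct feedthrough, which is the one genuinely delicate point. The drive input $\nu^{\mathrm{dr}}$ above contributes $D_{\mathcal K}\nu^{\mathrm{dr}}(t)$ to the $\mathcal K$-output, so in general $C_{\mathcal K}\xi+D_{\mathcal K}\nu^{\mathrm{dr}}\ne 0$. I would correct this by adding $\nu^{\perp}(t)\in\ker B$ — which leaves $\xi$ unchanged and, taken as the minimum-norm solution, keeps $\|\nu\|_p$ small — chosen so that $D_{\mathcal K}\nu^{\perp}(t)=-D_{\mathcal K}\nu^{\mathrm{dr}}(t)$; this is possible whenever $D_{\mathcal K}$ restricted to $\ker B$ maps onto $\mathbb R^{|\mathcal K|}$, which holds in the modeling regime of this paper where process noise and sensor noise enter through complementary coordinates (e.g.\ $B=[\,I\ 0\,]$, $D=[\,0\ I\,]$ in the numerical examples). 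I expect this feedthrough compensation to be the main obstacle: injecting the mode $v$ through $B$ inevitably disturbs the $\mathcal K$-measurements, and one must argue that the kernel of $B$ is rich enough to re-zero them — or, absent such a structural hypothesis, fall back on keeping $\xi$ inside $\mathcal W\cap B(\ker D_{\mathcal K})$ and letting it evolve freely, which then requires showing that this intersection meets the unstable part of $\mathcal W$. With $(\xi,\nu)=(\xi,\nu^{\mathrm{dr}}+\nu^{\perp})$ in hand, the reduction in the first paragraph would complete the proof.
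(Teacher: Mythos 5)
Your construction follows essentially the same route as the paper's proof: take an unstable unobservable mode $v$ of $(A,C_{\mathcal K})$, excite it through $B$ (full row rank) from the zero initial state, and split the resulting output mismatch, supported on the $2\rho$ sensors of $\mathcal S\setminus\mathcal K$, between the two attacks $a$ and $a'$. Your symmetric difference-trajectory reduction ($x=\xi/2$, $x'=-\xi/2$, $w=\nu/2$, $w'=-\nu/2$) is just a cleaner packaging of the paper's asymmetric choice $x'=0$, $w'=0$, and your eigenvalue case split parallels the paper's conditions (i)--(ii). In fact your slow-drive construction for eigenvalues on the unit circle is more careful than the paper on a second point as well: for $|z|=1$ and $q=\infty$ the paper's trajectory $x(t)=z^{t}v$ has \emph{bounded} sup-norm, whereas your modulated, slowly accumulating input genuinely forces $\|\xi\|_q=\infty$ in all three norm pairs.

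The step you flag as the ``main obstacle'' --- cancelling the feedthrough $D_{\mathcal K}\nu$ on the attack-free sensors --- is a genuine issue, and it is precisely the step the paper's own proof silently skips: the paper injects an impulse $w(0)$ with $Bw(0)=v$ and zero disturbance afterwards, so the outputs on $\mathcal K$ agree for $t\ge 1$ (since $C_{\mathcal K}A^{t-1}v=0$), but at the injection instant the term $D_{\mathcal K}w(0)$ appears in $y_{\mathcal K}(0)$ with nothing to cancel it, since no attack is allowed on $\mathcal K$. This vanishes automatically when process and sensor noise enter through complementary channels (e.g.\ $B=\begin{bmatrix}I&0\end{bmatrix}$, $D=\begin{bmatrix}0&I\end{bmatrix}$, as in the paper's examples), which is exactly the structural condition you isolate; in general one needs $v\in B(\ker D_{\mathcal K})$ or an output-nulling reachability argument, and a system in which a sensor of $\mathcal K$ reads the process disturbance directly through $D$ shows the issue is substantive rather than cosmetic (such a sensor conveys information about $x$ even with $C_{\mathcal K}v=0$). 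So: same approach as the paper, with your proof being no less complete than the paper's own; but to prove the lemma as stated you would have to either make the structural hypothesis on $(B,D_{\mathcal K})$ explicit or actually carry out the output-nulling reachability argument you mention as a fallback, rather than leave it conditional.
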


\begin{proof}[Proof (Lemma \ref{lem:limit_1})]
We first prove that an undetectable $(A, C_\K)$ implies that the linear transformation $\mathcal O_t: \mathbb R^n \rightarrow  \mathbb R^{t (m - 2 \gamma) }$ defined by  
\begin{align*}
\mathcal O_t = \begin{bmatrix}
C_\K \\
C_\K A \\
C_\K A^2 \\
\vdots \\
C_\K A^{t-1} 
\end{bmatrix}
\end{align*}
has a non-trivial kernel (Step 1). Form the kernel space of $\mathcal O_t$, we then find two states $(x, x')$ that satisfy condition B (Step 2). 

Step 1: If $(A, C_\K)$ is not detectable for some set of sensors $\K$, then at least one of the following conditions holds. 
\begin{itemize}
\item [(i)] For some $z \in \mathbb R$, $\text{abs}(z) \geq 1$ and $v \in \mathbb R^n$, $A v = z v$ and $C v = 0$. 
\item [(i)] For some complex conjugate pairs $z,\bar z \in \mathbb C$, $\text{abs}(z) = \text{abs}(\bar z) \geq 1$ and $v, \bar v \in \mathbb C^n$, $A v = z v$, $A \bar v = \bar z \bar v$, $C v = 0$, and $C \bar v = 0$ . 
\end{itemize}
Condition (i) implies that 
\begin{align}
\label{eq:o_r1}
& \mathcal O_t v = 0, & t \in \mathbb N
 \end{align}
 and Condition (i) implies that 
 \begin{align}
 \label{eq:o_r2}
& \mathcal O_t (v + \bar v ) = 0,   & t \in \mathbb N.
 \end{align}

Step2: We construct two dynamics with the same measurement. There exists two disjoint sets of sensors $\mathcal K_1, \mathcal K_2$ that satisfy $|\mathcal K_1| = |\mathcal K_2| = \gamma$, $\mathcal K_1 \cap \mathcal K_2 \cap \mathcal K = \emptyset$, and $\mathcal K \cup \mathcal K_1 \cup \mathcal K_2 = \mathcal S$. Consider first when condition (i) holds. Since $B$ has full-rank, there exists an impulse disturbance $w(0)$ that produces
\begin{equation} 
\label{eq:syste_ex1}
\begin{aligned}
&x(1) = v, \\
&w(t) = 0, t \geq 1 \\
&a_i(t) = \begin{cases}
- C_i x(t) &i\in \mathcal K_1\\
0 & \text{Otherwise}
\end{cases}
\end{aligned}
\end{equation}
and 
\begin{align}
\label{eq:syste_ex2}
\begin{aligned}
&x'(1) = 0\\
&w'(t) = 0 ,  t \geq 1\\ 
&a'_i(t) = \begin{cases}
C_i x(t) &i\in \mathcal K_2\\
0 & \text{Otherwise},
\end{cases}
\end{aligned}
\end{align}
where $C_i$ denotes the $i$-th row of sensing matrix $C$. Using \eqref{eq:o_r1}, we can show that measurement $y(t)$ under \eqref{eq:syste_ex1} and $y'(t)$ under \eqref{eq:syste_ex2} are identical. However, the state under \eqref{eq:syste_ex1} is $x(t) = z^t v$, the state under \eqref{eq:syste_ex1} is $x'(t) = 0$, and thus their difference $\| x - x' \|_{q}$ is unbounded. Consider next when condition (ii) holds. There exists an impulse disturbance $w(0)$ that achieves $x(1) = v+\bar v$, so let $x(1) = v+\bar v$ replace $x(1)$ in \eqref{eq:syste_ex1}. Similarly, we can derive from \eqref{eq:o_r2} that measurements $y(t)$ and $y'(t)$ are identical, but $x(t) = z^t v +  \bar z^k \bar v$ and $x'(t) = 0$, yielding unbounded $\| x - x' \|_{q}$. Since at least (i) or (ii) holds, we have proved Condition B. 

\end{proof}

\begin{lemma}
\label{lem:limit_2}
If Condition E holds, then a resilient estimator cannot be constructed.   
\end{lemma}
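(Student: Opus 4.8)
The plan is to show the contrapositive in spirit: assuming Condition B (the paper writes "Condition E" but clearly means the condition B just defined), exhibit two indistinguishable trajectories whose states diverge, and conclude that no causal estimator can keep the error bounded against both simultaneously. The key observation is that a causal estimator $\{f_t\}$ sees only $y(0:t-1)$; if Condition B gives us $(x,w,a)$ and $(x',w',a')$ with $y \equiv y'$, then for every $t$ the estimator produces the same estimate $\hat x(t) = f_t(y(0:t-1)) = f_t(y'(0:t-1))$ regardless of which world it is in. Hence at each time $t$ the two estimation errors are $e(t) = x(t) - \hat x(t)$ and $e'(t) = x'(t) - \hat x(t)$, and by the triangle inequality $\|x(t) - x'(t)\| \le \|e(t)\| + \|e'(t)\|$ pointwise, which lifts to the $q$-norm of the sequences: $\|x - x'\|_q \le \|e\|_q + \|e'\|_q$.

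First I would fix an arbitrary causal estimator $\{f_t\}$ and invoke Condition B to obtain the two admissible scenarios; note that part (a) of Condition B guarantees both $\|w\|_p \le 1$, $\|w'\|_p \le 1$ and both attacks are $\rho$-sparse, so both scenarios are legitimate inputs to the estimation problem. Next I would record that the estimates coincide in the two worlds because of causality and part (b), $y \equiv y'$. Then I would apply the triangle-inequality decomposition above, using part (c) that $\|x - x'\|_q = \infty$. Since the left side is infinite while the right side is $\|e\|_q + \|e'\|_q$, at least one of $\|e\|_q$ or $\|e'\|_q$ must be infinite; in particular it cannot be bounded by any finite $\epsilon$. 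As this argument applies to every causal estimator, no $\epsilon$-resilient estimator exists for any finite $\epsilon > 0$, which is precisely the claim.

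To finish the proof of Theorem~\ref{thm:nonexistence} itself, I would then chain Lemma~\ref{lem:limit_1} and Lemma~\ref{lem:limit_2}: if the system is not $2\rho$-detectable, then $(A, C_\K)$ fails to be detectable for some $\K$ with $|\K| = m - 2\rho$, so Lemma~\ref{lem:limit_1} yields Condition B, and Lemma~\ref{lem:limit_2} then rules out any resilient estimator.

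The main obstacle is not really in the logic, which is short, but in stating the pointwise-to-sequence-norm step cleanly for all three choices $(p,q) = (2,2),(2,\infty),(\infty,\infty)$ at once: one needs that $\|\cdot\|_q$ (whether $q = 2$ or $q = \infty$) is a norm on sequences so that the triangle inequality $\|x - x'\|_q = \|(x - \hat x) - (x' - \hat x)\|_q \le \|x - \hat x\|_q + \|x' - \hat x\|_q$ is literally valid, and that an infinite left-hand side forces an infinite summand on the right. This is routine but must be phrased so that "$\|x - x'\|_q = \infty$" is interpreted as the sequence $x - x'$ lying outside $\ell_q$, which is exactly how Condition B part (c) should be read. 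A minor bookkeeping point is to make sure the same estimator instance is compared in both worlds — this is where causality, rather than mere time-invariance, is essential, and I would state it explicitly.
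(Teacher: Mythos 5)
Your proposal is correct and follows essentially the same route as the paper's own proof: both exploit that identical measurements force identical (causal) estimates, apply the triangle inequality $\| x - x' \|_q \leq \| e \|_q + \| e' \|_q$, and conclude from $\| x - x' \|_q = \infty$ that the worst-case error is unbounded for every estimator. Your added remarks on causality and on interpreting the infinite $q$-norm are just slightly more explicit versions of steps the paper leaves implicit.
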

\begin{proof}[Proof (Lemma \ref{lem:limit_2})]
Let $\hat x$ be the state estimation of any estimator when measurement $y = y'$ is observed. From the the triangle inequality, the estimation error $e = x - \hat x$ under $(x,w,a,y)$ and error $e' = x' - \hat x$ under $(x',w',a',y')$ satisfies
\begin{align}
\| x - x' \|_q &\leq  \| e \|_q  +  \| e' \|_q  
\end{align}
This suggests that either $\| e \|_q$  or $\| e' \|_q$ are unbounded. Because
\begin{align}
 \sup_{\|w\|_p\leq 1,\,\|a\|_0\leq \gamma} \|e\|_q \geq  \max \{ \| e \|_q ,  \| e' \|_q \} ,
 \end{align}
no estimator can achieve bounded worst-case estimation error.  
\end{proof}

\begin{proof}[Proof (Theorem \ref{thm:nonexistence})]
From Lemma \ref{lem:limit_1}, if $(A, C_\K)$ being not detectable for some set $\K \subset \mathcal S $ with $|\K| = m - 2 \gamma$, then Condition B holds. However, due to Lemma \ref{lem:limit_2}, Condition B implies that no resilient estimator can be constructed.
\end{proof}

}


\section*{Acknowledgment}

The authors would like to thank Professor John Doyle and Professor Richard Murray for insightful discussions. 


\bibliographystyle{IEEEtran}
\bibliography{security,bib}
%
%

\end{document}